\newcommand{\cC}{\mathcal{C}}
\newcommand{\cW}{\mathcal{W}}
\newcommand{\cE}{\mathcal{E}}
\newcommand{\cL}{\mathcal{L}}
\theoremstyle{theorem}
\newtheorem{Prop}{Proposition}[section]
\newtheorem{Lem}[Prop]{Lemma}
\newtheorem{Thm}[Prop]{Theorem}
\theoremstyle{definition}
\newtheorem{Rem}[Prop]{Remark}
\newtheorem{Ex}[Prop]{Example}
\newcommand{\eps}{\varepsilon}
\newcommand{\N}{{\mathbb{N}}}
\newcommand{\R}{{\mathbb{R}}}
\newcommand{\C}{{\mathbb{C}}}
\newcommand{\Gl}{\mathbf{Gl}}
\newcommand{\la}{\lambda}
\DeclareMathOperator{\rk}{rk}
\DeclareMathOperator{\im}{im}
\newcommand{\setdef}[2]{\left\{\ #1\ \left|\ \vphantom{#1} #2\ \right.\right\}}
\newcommand{\ddt}{\tfrac{\text{\normalfont d}}{\text{\normalfont d}t}}
\newcommand{\ds}[1]{{\rm \, d} #1 \,}
\newenvironment{smallbmatrix}
{\left[\begin{smallmatrix}}
{\end{smallmatrix}\right]}
\newlength{\innersep}
\newlength{\maxlength}
\newlength{\dummylength}
\newcommand{\JordanBlock}[3]{
\setlength{\arraycolsep}{0pt}
\renewcommand{\arraystretch}{0}
\settowidth{\maxlength}{$#1$}
\settoheight{\dummylength}{$#1$}
\ifdim\dummylength>\maxlength
  \setlength{\maxlength}{\dummylength}
\fi
\settowidth{\dummylength}{$#2$}
\ifdim\dummylength>\maxlength
  \setlength{\maxlength}{\dummylength}
\fi
\settoheight{\dummylength}{$#2$}
\ifdim\dummylength>\maxlength
  \setlength{\maxlength}{\dummylength}
\fi
\setlength{\innersep}{0.1\maxlength}
\addtolength{\maxlength}{\innersep}
\addtolength{\maxlength}{\innersep}
\newcommand{\invisiblebox}{\phantom{\rule{\maxlength}{\maxlength}}}
\begin{array}{cccc}
  \tikz[remember picture] \node[outer sep=0,inner sep=\innersep] (a11) {$#1$}; &{\tikz[remember picture] \node[outer sep=0,inner sep=\innersep] (a21) {$#2$};}& & \invisiblebox\\
  &&\phantom{\rule{#3}{#3}} &\\
   \invisiblebox &\invisiblebox& &{\tikz[remember picture] \node[outer sep=0,inner sep=\innersep] (a43) {$#2$};} \\
   \invisiblebox &\invisiblebox& &
   {\tikz[remember picture] \node[outer sep=0,inner sep=\innersep] (a44) {$#1$};}
\end{array}
\tikz[remember picture, overlay] \draw (a11) edge[very thick] (a44);
\tikz[remember picture, overlay] \draw (a21) edge[very thick] (a43);
}
\begin{document}

\begin{frontmatter}

\title{Tracking with prescribed performance for linear non-minimum phase systems\tnoteref{thanks}}
\tnotetext[thanks]{This work was supported by the German Research Foundation (Deutsche Forschungsgemeinschaft) via the grant BE 6263/1-1.}

\author[Paderborn]{Thomas Berger}\ead{thomas.berger@math.upb.de}

\address[Paderborn]{Institut f\"ur Mathematik, Universit\"at Paderborn, Warburger Str.~100, 33098~Paderborn, Germany\vspace*{-0.8cm}}

\begin{keyword}
linear systems;
robust control;
non-minimum phase;
funnel control;
relative degree.
\end{keyword}

\begin{abstract}
We consider tracking control for uncertain linear systems with known relative degree which are possibly non-minimum phase, i.e., their zero dynamics may have an unstable part. For a given sufficiently smooth reference signal we design a low-complexity controller which achieves that the tracking error evolves within a prescribed performance funnel. We present a novel approach where a new output is constructed, with respect to which the system has a higher relative degree, but the unstable part of the zero dynamics is eliminated. Using recent results in funnel control, we then design a controller with respect to this new output, which also incorporates a new reference signal. We prove that the original output stays within a prescribed performance funnel around the original reference trajectory and all signals in the closed-loop system are bounded. The results are illustrated by some simulations.
\end{abstract}

\end{frontmatter}


%
\section{Introduction}\label{Sec:Intr}
%

We study output tracking for uncertain linear non-minimum phase systems with arbitrary relative degree by funnel control. The concept of funnel control was originally developed in~\cite{IlchRyan02b}, see also the survey~\cite{IlchRyan08} and the references therein. The funnel controller is an adaptive controller of high-gain type and proved to be the appropriate tool for tracking problems in various applications, such as temperature control of chemical reactor models~\cite{IlchTren04}, control of industrial servo-systems~\cite{Hack17} and underactuated multibody systems~\cite{BergOtto19}, speed control of wind turbine systems~\cite{Hack14,Hack15b}, voltage and current control of electrical circuits~\cite{BergReis14a}, control of peak inspiratory pressure~\cite{PompWeye15} and adaptive cruise control~\cite{BergRaue18}.

The above mentioned applications have the advantage that their underlying dynamics are minimum-phase, i.e., their internal dynamics (zero dynamics in the linear case) are bounded-input, bounded-output stable. The internal dynamics and the minimum phase property are extensively studied in the literature, see e.g.~\cite{ByrnWill84, IlchWirt13, Mare84, Mors83}. A main obstacle for feedback controllers are systems which are not minimum phase, i.e., their internal dynamics have an unstable part. Such unstable parts of the internal dynamics may impose fundamental limitations on the transient tracking performance as shown in~\cite{QiuDavi93b}. These limitations were already highlighted in the seminal work by Byrnes and Isidori~\cite{ByrnIsid90b}, where they prove that the regulator problem is solvable provided that the internal dynamics of the system have a hyperbolic equilibrium. The solution is constructed from the solution of a set of partial differential-algebraic equations, which however may be very difficult to solve, if not impossible. Extending the approach from~\cite{ByrnIsid90b}, in~\cite{GopaHedr93} so called \emph{ideal internal dynamics} are used and made attractive by a suitable redefinition of the output which does not change the relative degree. Using a sliding control law, it is achieved that the new output tracks a suitably modified reference signal and in the end, the original output asymptotically tracks the original reference trajectory. However, the ideal internal dynamics require a \emph{trackability assumption}, i.e., the existence of a bounded solution of the internal dynamics when the reference signal is inserted for the output. In~\cite{ShkoShte02} the approach from~\cite{GopaHedr93} is extended by using the so called \emph{system center method}, and~\cite{SuLin11} develop further improvements. These methods aim at asymptotically obtaining the ideal internal dynamics, however sufficient conditions for their feasibility are not available.

In a different approach,~\cite{ChenPade96, DevaChen96} aim to resolve the problem imposed by unstable internal dynamics using the concept of \emph{stable system inversion}. In contrast to~\cite{ByrnIsid90b}, an open-loop (feedfoward) control input is calculated here for all times, based on the given reference trajectory. A drawback of this approach is that in the case of non-minimum phase systems, a reverse-time integration is used and hence the computed control input must start in advance to achieve the desired tracking performance. Therefore, the open-loop control input is non-causal in this case. Extensions of this approach are discussed in~\cite{Deva99, DevaPade98, HuntMeye97b, TaylLi02} for instance.

Noteworthy is also the approach presented by Isidori in~\cite{Isid00}, where stabilization of non-minimum phase systems by dynamic compensators is considered. The crucial assumption imposed in the aforementioned work is that an auxiliary system resulting from the interconnection with the compensator is itself stabilizable by dynamic output feedback. Later,~\cite{NazrKhal11} pointed out that this is equivalent to using a compensator which provides a new output with respect to which the interconnection has relative degree one. The assumption then is that the internal dynamics of the interconnection are stable. Extensions of this approach to regulator problems have been studied in~\cite{IsidMarc03, MarcIsid04, NazrKhal09}. It is an advantage of this approach that by using high-gain observers the control objective can be achieved by output feedback only. However, prescribed performance of the original tracking error cannot be achieved, not even if a funnel controller would be used in this framework, since transient bounds for the new output given by the compensator do not lead to transient bounds for the original tracking error.

Last but not least, we like to mention the approach presented in~\cite{HausSast92}, where tracking for \emph{slightly non-minimum phase} systems is considered.

In the present paper, we introduce a novel approach to treat output tracking with prescribed performance of the tracking error for uncertain linear non-minimum phase systems with arbitrary relative degree. Similar to~\cite{GopaHedr93} we define a new output for the system. However, our aim is not to keep the relative degree as it is and stabilize the internal dynamics, but to completely remove the unstable part of the internal dynamics by increasing the relative degree. The new output is a part of the former internal dynamics, and a suitable redefinition of the reference trajectory is necessary as well. To this end, we insert the original reference signal into the part of the internal dynamics which has been eliminated by the output redefinition. If the internal dynamics have a hyperbolic equilibrium, then it is possible to suitably adjust the initial value so that the solution, which provides the new reference signal, is bounded; this is different from the trackability assumption in~\cite{GopaHedr93}. Under a mild assumption, we may also allow for a non-hyperbolic equilibrium. We may then apply the funnel controller for systems with arbitrary relative degree developed in~\cite{BergHoan18} to the system with new output and new reference signal. We show that by a suitable choice of the design parameters it can be achieved that the original output stays within a prescribed performance funnel around the original reference trajectory. As far as the author is aware, another result on tracking with prescribe performance for non-minimum phase systems is not available in the literature.

We stress that a main feature of funnel control is that it is model-free (only structural assumptions on the system class are required, such as the minimum phase property) and hence inherently robust. Moreover, it was recently shown that even for higher relative degree systems funnel control is feasible using output error feedback only, and no derivatives of the output are required, see~\cite{BergReis18a, BergReis18b}. These features are partially lost when dealing with non-minimum phase systems, where some knowledge of the system parameters and measurement of additional state variables is required. The additional knowledge is used to construct the new output and reference signal to which the funnel controller is applied. Robustness with respect to a large class of uncertainties is still retained.

Throughout this article, we use the following notation: We write $\R_{\ge 0}=[0,\infty)$ and $\C_-$, $(\C_+)$ denotes the set of complex numbers with negative (positive) real part. ${\Gl}_n(\R)$ denotes the group of invertible matrices in $\R^{n\times n}$ and $\sigma(A)$ the spectrum of $A\in\R^{n\times n}$. By $\mathcal{L}^\infty(I\!\to\!\R^n)$ we denote the set of essentially bounded functions $f:I\!\to\!\R^n$ with norm $\|f\|_\infty = {\rm ess\ sup}_{t\in I} \|f(t)\|$. The set $\mathcal{W}^{k,\infty}(I\!\to\!\R^n)$ contains all $k$-times weakly differentiable functions $f:I\!\to\!\R^n$ such that $f,\ldots, f^{(k)}\in \mathcal{L}^\infty(I\!\to\!\R^n)$. By $\mathcal{C}^k(I\!\to\!\R^n)$ we denote the set of $k$-times continuously differentiable functions, where $k\in\N_0\cup\{\infty\}$.

\subsection{System class}\label{Ssec:SysClass}

We consider uncertain linear systems given by
\begin{equation}\label{eq:ABC}
\begin{aligned}
    \dot x(t) &= Ax(t) + Bu(t) + d(t),\qquad x(0)=x^0\in\R^n,\\
    y(t) &= Cx(t),
\end{aligned}
\end{equation}
where $A\in\R^{n\times n}$ and $B, C^\top \in\R^{n\times m}$, with the same number of inputs $u:\R_{\ge 0}\to\R^m$ and outputs $y:\R_{\ge 0}\to\R^m$, and $d:\R_{\ge 0}\to\R^n$ accounts for possible disturbances and uncertainties. We assume that~\eqref{eq:ABC} has strict relative degree $r\in\N$, that is
\begin{equation}\label{eq:rel-deg}
\begin{aligned}
   & CA^kB = 0,\ k=0,\ldots,r-2, \ CA^{r-1}B \in \Gl_n(\R),\\
   & CA^kd(\cdot) = 0,\ k=0,\ldots,r-2,
\end{aligned}
\end{equation}
cf.~\cite{Isid95}, and that $d\in\cL^{\infty}(\R_{\ge 0}\to\R^n)$. While adaptive control of minimum phase linear systems is well-studied, see e.g.\ the classical works~\cite{ByrnWill84, KhalSabe87, Mare84, Mors83}, we stress that we do not assume that~\eqref{eq:ABC} is minimum phase or, equivalently, its zero dynamics are asymptotically stable, cf.~\cite{IlchWirt13}. The latter would mean that $\rk \begin{smallbmatrix} A - \lambda I_n & B \\ C & 0\end{smallbmatrix} = n+m$ for all $\lambda\in\C_-$, see e.g.~\cite{IlchRyan07, Isid95}. As an important tool for the forthcoming controller design we recall the Byrnes-Isidori form for linear systems~\eqref{eq:ABC}. By a straightforward extension of~\cite[Lem.~3.5]{IlchRyan07} (see also~\cite{Isid95}) we have that, if~\eqref{eq:rel-deg} is satisfied, then there exists a state-space transformation $U\in\Gl_n(\R)$ such that $U x(t) = \big( y(t)^\top, \dot y(t)^\top, \ldots, y^{(r-1)}(t)^\top, \eta(t)^\top\big)^\top$, where $\eta:\R_{\ge 0}\to\R^{n-rm}$, transforms~\eqref{eq:ABC} into
\begin{equation}\label{eq:BIF}
\begin{aligned}
    y^{(r)}(t) &= \sum_{i=1}^r R_i y^{(i-1)}(t) + S \eta(t) + \Gamma u(t) + d_r(t),\\
    \dot \eta(t) &= P y(t) + Q\eta(t) +d_\eta(t),
\end{aligned}
\end{equation}
where $R_i\in\R^{m\times m}$ for $i=1,\ldots,r$, $S, P^\top\in\R^{m\times (n-rm)}$, $Q\in\R^{(n-rm)\times (n-rm)}$, $\Gamma := CA^{r-1} B$ and $(d_r^\top, d_\eta^\top)^\top = U d$. Furthermore,~\eqref{eq:ABC} is minimum phase if, and only if, $\sigma(Q)\subseteq\C_-$. The second equation in~\eqref{eq:BIF} represents the internal dynamics of the linear system~\eqref{eq:ABC}; if $y=0$, then these dynamics are called zero dynamics.


\subsection{Control objective}\label{Ssec:ContrObj}

To treat the non-minimum phase property of system~\eqref{eq:ABC} the system parameters $A, B, C$ need to be known, at least partially, and additional components of the state~$x$ need to be available to the controller; the required information is made precise in Sections~\ref{Sec:TrackAss} and~\ref{Sec:ContrStruc}. For the time being, assume that the measurement of a partial state $\hat x(t) = H x(t)$ is available, where~$H$ will be specified by the presented controller design. We stress that the measurement of the full state~$x(\cdot)$ or knowledge of the full initial value~$x^0$ and the disturbance~$d(\cdot)$ are, in general, not required. Therefore, the objective is to design a dynamic partial state feedback of the form
\begin{equation}\label{eq:objcontr}
\begin{aligned}
\dot{z}(t)\,&=F\big(t,z(t),\hat x(t),y_{\rm ref}(t)\big),\quad z(0) = z^0,\\
u(t)\,&=G\big(t,z(t),\hat x(t),y_{\rm ref}(t)\big),
\end{aligned}
\end{equation}
where $y_{\rm ref}:\R_{\ge 0}\to\R^m$ is a~sufficiently smooth reference signal, such that in the closed-loop system the tracking error $e(t)=y(t)-y_{\rm ref}(t)$ evolves within a prescribed performance funnel
\begin{equation}
\mathcal{F}_{\varphi} := \setdef{(t,e)\in\R_{\ge 0} \times\R^m}{\varphi(t) \|e\| < 1},\label{eq:perf_funnel}
\end{equation}
which is determined by a function~$\varphi$ belonging to
\[
\Phi_r \!:=\!
\left\{
\varphi\in  \cC^r(\R_{\ge 0}\to\R)
\left|\!\!\!
\begin{array}{l}
\text{ $\varphi, \dot \varphi,\ldots,\varphi^{(r)}$ are bounded,}\\
\text{ $\varphi (\tau)>0$ for all $\tau\ge 0$,}\\
 \text{ and }  \liminf_{\tau\rightarrow \infty} \varphi(\tau) > 0
\end{array}
\right.\!\!\!
\right\}.
\]
Furthermore, all signals~$x, u, z$ should remain bounded, even though~\eqref{eq:ABC} is non-minimum phase.

The funnel boundary is given by the reciprocal of $\varphi$ as depicted in Fig.~\ref{Fig:funnel}. In contrast to most other works on funnel control, cf.\ e.g.~\cite{BergHoan18,IlchRyan02b}, we do not allow for the case $\varphi(0)=0$ , which would mean that there is no restriction on the initial value since $\varphi(0) \|e(0)\| < 1$ and the funnel boundary $1/\varphi$ has a pole at $t=0$. For technical reasons, we require that the funnel boundary is ``finite'' at $t=0$.

\begin{figure}[h]
\vspace*{-2mm}
\captionsetup[subfloat]{labelformat=empty}
\hspace*{1cm}
\begin{tikzpicture}[scale=0.45]
\tikzset{>=latex}
  \filldraw[color=gray!15] plot[smooth] coordinates {(0.15,4.7)(0.7,2.9)(4,0.4)(6,1.5)(9.5,0.4)(10,0.333)(10.01,0.331)(10.041,0.3) (10.041,-0.3)(10.01,-0.331)(10,-0.333)(9.5,-0.4)(6,-1.5)(4,-0.4)(0.7,-2.9)(0.15,-4.7)};
  \draw[thick] plot[smooth] coordinates {(0.15,4.7)(0.7,2.9)(4,0.4)(6,1.5)(9.5,0.4)(10,0.333)(10.01,0.331)(10.041,0.3)};
  \draw[thick] plot[smooth] coordinates {(10.041,-0.3)(10.01,-0.331)(10,-0.333)(9.5,-0.4)(6,-1.5)(4,-0.4)(0.7,-2.9)(0.15,-4.7)};
  \draw[thick,fill=lightgray] (0,0) ellipse (0.4 and 5);
  \draw[thick] (0,0) ellipse (0.1 and 0.333);
  \draw[thick,fill=gray!15] (10.041,0) ellipse (0.1 and 0.333);
  \draw[thick] plot[smooth] coordinates {(0,2)(2,1.1)(4,-0.1)(6,-0.7)(9,0.25)(10,0.15)};
  \draw[thick,->] (-2,0)--(12,0) node[right,above]{\normalsize$t$};
  \draw[thick,dashed](0,0.333)--(10,0.333);
  \draw[thick,dashed](0,-0.333)--(10,-0.333);
  \node [black] at (0,2) {\textbullet};
  \draw[->,thick](4,-3)node[right]{\normalsize$\lambda$}--(2.5,-0.4);
  \draw[->,thick](3,3)node[right]{\normalsize$(0,e(0))$}--(0.07,2.07);
  \draw[->,thick](9,3)node[right]{\normalsize$\varphi(t)^{-1}$}--(7,1.4);
\end{tikzpicture}
\vspace*{-1mm}
\caption{Error evolution in a funnel $\mathcal F_{\varphi}$ with boundary $\varphi(t)^{-1}$.}
\label{Fig:funnel}
\end{figure}
\vspace*{-2mm}
Each performance funnel $\mathcal{F}_{\varphi}$ with $\varphi\in\Phi_r$ is
bounded away from zero as boundedness of $\varphi$ implies that there exists $\lambda>0$ such that $1/\varphi(t)\geq\lambda$ for all $t \ge 0$. The
funnel boundary is not necessarily monotonically decreasing, which might be advantageous in several applications. There are situations where widening the funnel over some later time interval might be beneficial, for instance in the presence of periodic disturbances or strongly varying reference signals. A variety of different funnel boundaries are possible, see e.g.~\cite[Sec.~3.2]{Ilch13}.

\subsection{Organization of the present paper}\label{Ssec:Orga}

In Section~\ref{Sec:TrackAss} we discuss the crucial assumptions in our framework for tracking uncertain non-minimum phase systems. These assumptions lead to the construction of a new output, with respect to which the system has a higher relative degree than~$r$, but the unstable part of the internal dynamics is eliminated. In Section~\ref{Sec:ContrStruc} the controller design is presented, which is based on the funnel controller developed in the recent work~\cite{BergHoan18}. The necessary redefinition of the reference signal is discussed as well and incorporated in the controller. Feasibility of the control is proved in Theorem~\ref{Thm:fun-con}. In Section~\ref{Sec:TransErr} we calculate a bound for the original tracking error, which can be adjusted to be as small as desired by an appropriate choice of the design parameters. The developed controller is then illustrated by a simulation in Section~\ref{Sec:Sim} and some conclusions are given in Section~\ref{Sec:Concl}.
%
\section{Trackability assumptions}\label{Sec:TrackAss}
%

It is revealed in~\cite{GopaHedr93} that for tracking non-minimum systems certain \emph{trackability assumptions} are necessary. In the following, we state the assumptions that are used in the present paper. We stress that these assumptions are much milder than the trackability assumption used in~\cite{GopaHedr93}, which essentially states that the equation
\[
    \dot \eta(t) = Q\eta(t) + P y_{\rm ref}(t)
\]
must have a bounded solution~$\eta:\R_{\ge 0}\to\R^{n-rm}$ for the given reference trajectory $y_{\rm ref}:\R_{\ge 0}\to\R^m$. Here, roughly speaking, we only require this for the non-hyperbolic part of the above equation. We make the following assumptions:
\begin{enumerate}
  \item[\textbf{(A1)}] There exists $T\in\Gl_{n-rm}(\R)$ and $\ell\in\N$ such that
  \begin{equation}\label{eq:decompQ}
    TQT^{-1} = \begin{bmatrix} \hat Q_1 & \hat Q_2 \\ 0&\tilde Q\end{bmatrix},\quad TP = \begin{bmatrix} \hat P\\ \tilde P\end{bmatrix},
  \end{equation}
  where $\hat Q_1\in\R^{k\times k}$, $\hat Q_2\in\R^{k\times \ell m}$, $\tilde Q\in\R^{\ell m\times \ell m}$, $\hat P\in\R^{k\times m}$. $\tilde P\in\R^{\ell m\times m}$, $k=n-rm-\ell m\ge 0$ with $\sigma(\hat Q_1)\subseteq\C_-$ and
  \begin{equation}\label{eq:inv-tildeQP}
     [\tilde P, \tilde Q \tilde P, \ldots, \tilde Q^{\ell-1} \tilde P] \in \Gl_{\ell m}(\R),
  \end{equation}
  and the disturbance satisfies
  \begin{equation}\label{eq:ass-dist-tildeP}
    \forall\, t\ge 0:\ [0, I_{\ell m}] T d_\eta(t) \in \im \tilde P.
  \end{equation}\vspace{-6mm}
  \item[\textbf{(A2)}] Let $y_{\rm ref}\in\cW^{r-1,\infty}(\R_{\ge 0}\to\R^m)$ be a given reference signal and $W\in\Gl_{\ell m}(\R)$ be such that
  \[
    W\tilde Q W^{-1} = \begin{bmatrix} Q_1&0&0\\ 0&Q_2&0\\ 0&0&Q_3\end{bmatrix},\quad W\tilde P = \begin{bmatrix} P_1\\ P_2\\ P_3\end{bmatrix},
  \]
  where $Q_j\in\R^{k_j\times k_j}$, $j=1,2,3$, and $\sigma(Q_1) \subseteq \C_-$, $\sigma(Q_2)\subseteq\C_+$ and $\sigma(Q_3)\subseteq {\rm i}\R$. Then the equation
  \[
    \dot \eta_3(t) = Q_3 \eta_3(t) + P_3 y_{\rm ref}(t),\quad \eta_3(0) = 0
  \]
  has a bounded solution $\eta_3:\R_{\ge 0}\to\R^{k_3}$.
\end{enumerate}
We will frequently choose the smallest~$\ell$ such that~(A1) is satisfied.

\begin{Rem}\ (i)\ We like to give a motivation for assumption~(A1). Basically it states that the unstable part of the matrix~$Q$ in the Byrnes-Isidori form~\eqref{eq:BIF} is completely contained in the matrix~$\tilde Q$ which, together with~$\tilde P$, satisfies the condition~\eqref{eq:inv-tildeQP} that will be explained in more detail later. Note that~$\tilde Q$ may contain some of the stable eigenvalues of~$Q$. Furthermore, we stress that the zero block in $TQT^{-1}$ in~\eqref{eq:decompQ} imposes an additional condition on~$Q$ which is not automatically satisfied, because~$\tilde Q$ must be of size $\ell m\times \ell m$, where~$m$ is given. As an example where such a decomposition is not possible consider $Q=\begin{smallbmatrix} -1 & 1 & 0\\ -1 & -1 & 0\\ 0 & 0 & 1\end{smallbmatrix}$ and $m=2$, where the eigenvalues of~$Q$ are given by~$\{-1, -1, 1\}$. Then the only possible choice for~$\ell$ would be~$\ell=1$, but a decomposition~\eqref{eq:decompQ} is not available in this case.\\
(ii)\ In the case of single-input, single-output systems we have $m=1$ and hence in condition~(A1) it is always possible to find a decomposition~\eqref{eq:decompQ} with $\sigma(\tilde Q)\subseteq \overline{\C_+}$ for some~$\ell\in\N$. However, in order to satisfy the invertibility condition~\eqref{eq:inv-tildeQP} in may be helpful to choose a larger~$\ell$, but then a decomposition~\eqref{eq:decompQ} may not necessarily exist, cf.\ the example given in~(i) above.\\
(iii)\ If the internal dynamics of~\eqref{eq:ABC} have a hyperbolic equilibrium, i.e., $\sigma(Q)\cap {\rm i}\R = \emptyset$ it follows that $k_3=0$. Therefore, assumption~(A2) is always satisfied in this case.\\
(iv)\ One may wonder whether, instead of assuming~(A1) and~(A2), it could be possible to simply assume that the pair~$(A,B)$ is stabilizable and a apply a feedback $u(t) = Fx(t) + v(t)$, where~$v$ is the new input, so that $\sigma(A+BF)\subseteq \C_-$. However, this does not necessarily result in a system which is minimum phase. As a counterexample consider~\eqref{eq:ABC} with $A=\begin{smallbmatrix} 0&1\\ 1&1\end{smallbmatrix}$, $B=\begin{smallbmatrix} 1\\ 0\end{smallbmatrix}$, $C=[1,0]$ and $d(\cdot)=0$. Then assumption~(A1) is satisfied and the system is controllable. However, for any (stabilizing) feedback $u(t) = f_1 x_1(t) + f_2 x_2(t) + v(t)$ the resulting system
        \[
            \begin{pmatrix} \dot y(t)\\ \dot\eta(t)\end{pmatrix} = \begin{bmatrix} f_1&1+f_2\\ 1&1\end{bmatrix} \begin{pmatrix} y(t)\\ \eta(t)\end{pmatrix} + \begin{bmatrix} 1\\ 0\end{bmatrix} v(t)
        \]
        is in Byrnes-Isidori form with $y=x_1$ and $\eta=x_2$, but is not minimum phase since $Q=1$. Thus, a controllable system cannot be rendered minimum phase by a suitable choice of feedback in general.
\end{Rem}

In the following, choose the smallest~$\ell\in\N$ such that assumption~(A1) is satisfied. With the decomposition of~$Q$ as in~\eqref{eq:decompQ} we may further transform the system from~\eqref{eq:BIF} using $T\eta = (\eta_1^\top, \eta_2^\top)^\top$ with $\eta_1:\R_{\ge 0}\to\R^{n-rm-\ell m}$, $\eta_2:\R_{\ge 0}\to\R^{\ell m}$ into
\begin{equation}\label{eq:BIF-refined}
\begin{aligned}
    y^{(r)}(t) &= \sum_{i=1}^r R_i y^{(i-1)}(t) \!+\! S_1 \eta_1(t) \!+\! S_2 \eta_2(t) \!+\! \Gamma u(t) + d_r(t),\\
    \dot \eta_1(t) &= \hat Q_1 \eta_1(t) + \hat Q_2 \eta_2(t) + \hat P y(t) + d_{\eta_1}(t),\\
    \dot \eta_2(t) &= \phantom{\hat Q_1 \eta_1(t) +}\ \ \, \tilde Q \eta_2(t) + \tilde P y(t) + d_{\eta_2}(t),
\end{aligned}
\end{equation}
where $[S_1, S_2] = ST^{-1}$ and $(d_{\eta_1}^\top, d_{\eta_2}^\top)^\top = T d_{\eta}$.

Based on this, we define a new output for system~\eqref{eq:ABC}. Invoking~\eqref{eq:inv-tildeQP}, set
\begin{equation}\label{eq:defK}
    K:=[0,\ldots,0,\Gamma^{-1}] [\tilde P, \tilde Q \tilde P, \ldots, \tilde Q^{\ell-1} \tilde P]^{-1} \in\R^{m\times \ell m}
\end{equation}
and observe that this implies
\[
    K\tilde P = K\tilde Q \tilde P = \ldots = K\tilde Q^{\ell-2} \tilde P = 0,\quad K\tilde Q^{\ell-1}\tilde P = \Gamma^{-1}.
\]
It is a straightforward calculation that the condition~\eqref{eq:ass-dist-tildeP} on the disturbance is equivalent to $K d_{\eta_2}(\cdot) = K \tilde Q d_{\eta_2}(\cdot) = \ldots = K \tilde Q^{\ell-2} d_{\eta_2}(\cdot) = 0$. Therefore, the linear system $\dot z(t) = \tilde Q z(t) + \tilde P \tilde u(t) + d_{\eta_2}(t)$, $\tilde y(t) = K z(t)$ has strict relative degree~$\ell$. In view of~\eqref{eq:ass-dist-tildeP}, let $\delta:\R_{\ge 0}\to\R^{m}$ be such that $\tilde P \delta(t) = d_{\eta_2}(t)$ for all $t\ge 0$; it can be calculated that~$\delta = \Gamma K \tilde Q^{\ell-1} d_{\eta_2}$ is uniquely determined. For later use, we record that it follows from~\cite[Lem.~3.5]{IlchRyan07} that
\begin{equation}\label{eq:cond-ker-KQ}
    \begin{smallbmatrix} K\\ K\tilde Q\\ \vdots\\ K\tilde Q^{\ell -1}\end{smallbmatrix} \in\Gl_{\ell m}(\R).
\end{equation}
As new output for system~\eqref{eq:ABC} we now define
\begin{equation}\label{eq:new-output}
    y_{\rm new}(t) := K \eta_2(t).
\end{equation}
We show that system~\eqref{eq:ABC} with the new output as in~\eqref{eq:new-output} has strict relative degree~$r+\ell$. Assume that $\delta\in\cW^{r,\infty}(\R_{\ge 0}\to\R^m)$ and observe that
\[
    \begin{pmatrix} y_{\rm new}(t) \\ \dot y_{\rm new}(t)\\ \vdots\\ y_{\rm new}^{(\ell-1)}(t)\\ y_{\rm new}^{(\ell)}(t)\end{pmatrix} = \begin{bmatrix} K\\ K\tilde Q\\ \vdots\\ K\tilde Q^{\ell -1}\\ K\tilde Q^\ell\end{bmatrix} \eta_2(t) + \begin{bmatrix} 0\\ \vdots\\ 0\\ \Gamma^{-1}\end{bmatrix} \big(y(t) + \delta(t)\big),
\]
hence it follows from~\eqref{eq:cond-ker-KQ} that there exist $F_1,\ldots, F_\ell\in\R^{\ell m\times m}$ such that
\begin{align}
  \eta_2(t) &= \sum\nolimits_{i=1}^\ell F_i \, y_{\rm new}^{(i-1)}(t),\notag\\
  y(t) &= \Gamma y_{\rm new}^{(\ell)}(t) + \sum\nolimits_{i=1}^\ell \Gamma K\tilde Q^\ell F_i \, y_{\rm new}^{(i-1)}(t) - \delta(t).\label{eq:y-ynew}
\end{align}
Therefore, invoking~\eqref{eq:BIF-refined} it follows that
\begin{align*}
    y_{\rm new}^{(r+\ell)}(t) \!&=\! -\!\sum_{i=1}^\ell K\tilde Q^\ell\! F_i y_{\rm new}^{(r+i-1)}(t) \!+\! \sum_{i=1}^r \Gamma^{-1}\! R_i \Gamma y_{\rm new}^{(\ell+i-1)}(t) \\
    &\quad + \sum_{i=1}^r \sum_{j=1}^\ell \Gamma^{-1} R_i \Gamma K\tilde Q^\ell F_j \,y_{\rm new}^{(i+j-2)}(t)\\
    &\quad +  \sum_{i=1}^\ell \Gamma^{-1} S_2 F_i \, y_{\rm new}^{(i-1)}(t) + S_1 \eta_1(t) + u(t) \\
    &\quad+ \sum_{i=1}^r \Gamma^{-1} R_i \delta^{(i-1)}(t) + \Gamma^{-1} \big(\delta^{(r)}(t) + d_r(t)\big)
\end{align*}
and by some straightforward simplification we obtain
\begin{equation}\label{eq:BIF-final}
\begin{aligned}
    y_{\rm new}^{(r+\ell)}(t) &= \sum_{i=1}^{r+\ell} \hat R_i\, y_{\rm new}^{(i-1)}(t) + S_1 \eta_1(t) + u(t)\\
      &\quad   + \sum_{i=1}^r D_i \delta^{(i-1)}(t) + \Gamma^{-1} \big(\delta^{(r)}(t) + d_r(t)\big),\\
    \dot \eta_1(t) &= \sum_{i=1}^{\ell+1} \hat P_i\, y_{\rm new}^{(i-1)}(t) + \hat Q_1 \eta_1(t) + d_{\eta_1}(t),
\end{aligned}
\end{equation}
for some $\hat R_i, D_i\in\R^{m\times m}$, $i=1,\ldots,r+\ell$ and $\hat P_j\in\R^{(n-rm-\ell m)\times m}$, $j=1,\ldots,\ell+1$. Note that the unstable part of the internal dynamics of~\eqref{eq:ABC}, represented by~$Q_2$ and~$Q_3$, has been completely removed in~\eqref{eq:BIF-final} by using the new output as in~\eqref{eq:new-output}.

\begin{Rem}
  The determination of the new output~\eqref{eq:new-output} is related to finding a so called \emph{flat output} for the subsystem $\dot \eta_2(t) = \tilde Q\eta_2(t) + \tilde P y(t) + d_{\eta_2}(t)$ of the internal dynamics as represented in~\eqref{eq:BIF-refined}, where~$y$ is viewed as the input of this subsystem. Recall that all state and input variables can be parameterized in terms of a flat output, if it exists, see e.g.~\cite{FlieLevi95}. While for linear systems as discussed here this is straightforward, cf.~\eqref{eq:y-ynew}, appropriate results from the theory of differentially flat systems may be helpful for an extension of the results derived in the present paper to nonlinear systems.
\end{Rem}

%
\section{Controller design and feasibility}\label{Sec:ContrStruc}
%

In this section, we propose a novel and simple funnel controller which achieves the control objective. To this end, we will use the recently developed funnel controller from~\cite{BergHoan18} and apply it to the system~\eqref{eq:ABC} with new output~\eqref{eq:new-output}. In order for this to work, since the output has been redefined, tracking requires an appropriate redefinition of the reference signal as well, so that in the end the original output tracks the original reference trajectory with the desired behavior. By the construction of the new output in~\eqref{eq:new-output}, the new reference signal is generated by the corresponding subsystem of~\eqref{eq:BIF-refined} when the original reference signal is inserted for the original output and the disturbance (which is unknown) is ignored, i.e.,
\begin{equation}\label{eq:new-ref}
\begin{aligned}
    \dot \eta_{2,\rm ref}(t) &= \tilde Q \eta_{2,\rm ref}(t) + \tilde P y_{\rm ref}(t),\quad \eta_{2,\rm ref}(0)=\eta_{2,\rm ref}^0,\\
    \hat y_{\rm ref}(t) &= K \eta_{2,\rm ref}(t).
\end{aligned}
\end{equation}
We show in the following that if $y_{\rm ref}\in\mathcal{W}^{r-1,\infty}(\R_{\ge 0}\to\R^m)$, then by assumption~(A2) and an appropriate choice of the initial value~$\eta_{2,\rm ref}^0$ we may achieve that the derivatives of the new reference signal $\hat y_{\rm ref}^{(i)}$ are bounded for $i=0,\ldots,r+\ell$.

\begin{Lem}\label{Lem:new-ref}
Let $y_{\rm ref}\in\mathcal{W}^{r-1,\infty}(\R_{\ge 0}\to\R^m)$, assume that~(A2) holds and use the notation given there. If
\begin{equation}\label{eq:eta-ref-0}
    \eta_{2,\rm ref}^0 = W^{-1} \begin{bmatrix} 0_{k_1\times k_2}\\ -I_{k_2}\\ 0_{k_3\times k_2}\end{bmatrix} \int_0^\infty e^{-Q_2 s} P_2 y_{\rm ref}(s) \ds{s},
\end{equation}
then the initial value problem~\eqref{eq:new-ref} has a unique global solution such that $\hat y_{\rm ref}\in\mathcal{W}^{r+\ell,\infty}(\R_{\ge 0}\to\R^m)$.
\end{Lem}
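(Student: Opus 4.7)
The plan is to reduce the claim to three decoupled linear ODEs via the transformation $W$, then treat each component separately. Define $\zeta(t) = W\eta_{2,\rm ref}(t)$ with block decomposition $\zeta = (\zeta_1^\top,\zeta_2^\top,\zeta_3^\top)^\top$. By~(A2) the IVP~\eqref{eq:new-ref} becomes three independent subsystems
\[
 \dot\zeta_j(t) = Q_j \zeta_j(t) + P_j y_{\rm ref}(t),\quad \zeta_j(0) = \zeta_j^0,\quad j=1,2,3,
\]
and~\eqref{eq:eta-ref-0} prescribes the initial values $\zeta_1^0=0$, $\zeta_3^0=0$, and $\zeta_2^0 = -\int_0^\infty e^{-Q_2 s}P_2 y_{\rm ref}(s)\ds{s}$. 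Since each subsystem is linear with bounded inhomogeneity, existence and uniqueness of a global classical solution is immediate, so the entire content of the lemma lies in verifying boundedness of $\hat y_{\rm ref}$ together with its derivatives up to order $r+\ell$.

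The bounds on the three components of $\zeta$ are obtained as follows. For $\zeta_1$, stability $\sigma(Q_1)\subseteq\C_-$ combined with $y_{\rm ref}\in\mathcal{L}^\infty$ yields boundedness from the standard variation of constants estimate. For $\zeta_3$, the initial condition $\zeta_3^0=0$ matches exactly the scenario in~(A2), so $\zeta_3$ is bounded by hypothesis. The key step is $\zeta_2$: since $\sigma(Q_2)\subseteq\C_+$, the homogeneous flow $e^{Q_2 t}$ grows exponentially, and only a carefully chosen $\zeta_2^0$ can compensate the particular integral. Noting that $\sigma(-Q_2)\subseteq\C_-$ makes the improper integral in~\eqref{eq:eta-ref-0} absolutely convergent, the variation of constants formula combined with~\eqref{eq:eta-ref-0} gives
\[
 \zeta_2(t) = -e^{Q_2 t}\!\int_t^\infty \!e^{-Q_2 s}P_2 y_{\rm ref}(s)\ds{s} = -\!\int_0^\infty\! e^{-Q_2\tau}P_2 y_{\rm ref}(t+\tau)\ds{\tau},
\]
which is uniformly bounded in $t$ by $\|y_{\rm ref}\|_\infty$ and the exponential decay of $\|e^{-Q_2\tau}\|$. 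This telescoping/cancellation step is the main obstacle in the proof, and it is precisely what motivates the unusual-looking initial condition~\eqref{eq:eta-ref-0}.

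Once $\eta_{2,\rm ref} = W^{-1}\zeta \in \mathcal{L}^\infty$ is established, the derivatives of $\hat y_{\rm ref}$ are handled by repeatedly differentiating~\eqref{eq:new-ref}. Induction yields
\[
 \hat y_{\rm ref}^{(i)}(t) = K\tilde Q^i \eta_{2,\rm ref}(t) + \sum_{j=0}^{i-1} K\tilde Q^{i-1-j}\tilde P\, y_{\rm ref}^{(j)}(t).
\]
The defining property of $K$, namely $K\tilde P = K\tilde Q\tilde P = \ldots = K\tilde Q^{\ell-2}\tilde P = 0$ and $K\tilde Q^{\ell-1}\tilde P = \Gamma^{-1}$, causes the low-order derivatives of $y_{\rm ref}$ to be annihilated, so the sum actually only involves $y_{\rm ref}^{(j)}$ for $j \le i-\ell$. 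Combining this with the already-established bound on $\eta_{2,\rm ref}$ and the boundedness of the required derivatives of $y_{\rm ref}$, one concludes $\hat y_{\rm ref}^{(i)} \in \mathcal{L}^\infty$ for $i=0,\ldots,r+\ell$, completing the proof.
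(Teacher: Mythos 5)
Your proof is correct and follows essentially the same route as the paper: decompose via $W$ into the three decoupled subsystems, handle the stable block by $\sigma(Q_1)\subseteq\C_-$ and the central block by~(A2), and use the acausal initial condition to cancel the unstable mode of the $Q_2$-block. The only differences are presentational — you carry out explicitly the variation-of-constants computation $\zeta_2(t)=-\int_0^\infty e^{-Q_2\tau}P_2y_{\rm ref}(t+\tau)\,{\rm d}\tau$, which the paper delegates to a cited ``straightforward result'', and you write out the general formula for $\hat y_{\rm ref}^{(i)}$ with the annihilation property of $K$, whereas the paper records only the case $i=\ell$ and concludes from there.
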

\begin{proof} Recall the decomposition of $W \tilde Q W^{-1}$ and $W\tilde P$ from~(A2). First note that~\eqref{eq:eta-ref-0} is well-defined since~$y_{\rm ref}$ is bounded and $\sigma(-Q_2)\subseteq\C_-$. Furthermore, the initial value problem
\[
    \dot z_1(t) = Q_1 z_1(t) + P_1 y_{\rm ref}(t),\quad z_1(0) = 0,
\]
has a unique global solution which satisfies $z_1\in\mathcal{W}^{r,\infty}(\R_{\ge 0}\to\R^{\ell m -k_2-k_3})$. It follows from~(A2) that
\[
    \dot z_3(t) = Q_3 z_3(t) + P_3 y_{\rm ref}(t),\quad z_3(0) = 0,
\]
has a unique global solution which is bounded. Successively taking the derivative of~$z_3$ and evaluating the differential equation gives that $z_3\in\mathcal{W}^{r,\infty}(\R_{\ge 0}\to\R^{k_3})$. Finally, to show that
\begin{align*}
    \dot z_2(t) &= Q_2 z_2(t) + P_2 y_{\rm ref}(t),\\ z_2(0) &= -\int_0^\infty e^{-Q_2 s} P_2 y_{\rm ref}(s) \ds{s},
\end{align*}
has a bounded and unique global solution, although $\sigma(Q_2)\subseteq\C_+$, we use the following straightforward result for linear systems: For $A\in\R^{n\times n}$ with $\sigma(A)\subseteq\C_+$ and $B\in\R^{n\times m}$, $u\in\cL^\infty(\R_{\ge 0}\to\R^m)$, $x^0\in\R^n$ there exists $x\in\cW^{1,\infty}(\R_{\ge 0}\to\R^n)$ which solves $\dot x(t) = Ax(t) + B u(t)$ with $x(0)=x^0$ if, and only if,
$x^0 + \int_0^\infty e^{-A s} B u(s) \ds{s} = 0$. Therefore, we infer $z_2\in\mathcal{W}^{r,\infty}(\R_{\ge 0}\to\R^{k_2})$.

Now, we have that $\eta_{2,\rm ref} = W^{-1} (z_1^\top, z_2^\top, z_3^\top)^\top$ and, similar to Section~\ref{Sec:TrackAss}, we may derive that
\[
    \hat y_{\rm ref}^{(\ell)}(t) = K \tilde Q^\ell \eta_{2,\rm ref}(t) + \Gamma^{-1} y_{\rm ref}(t),
\]
and hence it follows that $\hat y_{\rm ref}\in\mathcal{W}^{r+\ell,\infty}(\R_{\ge 0}\to\R^m)$.
\end{proof}

The computation of $\eta_{2,\rm ref}^0$ in~\eqref{eq:eta-ref-0} requires the knowledge of~$y_{\rm ref}(t)$ for all $t\ge 0$, hence it is an acausal problem which may impose a challenge in applications. However, a large class of reference signals can be generated by linear \emph{exosystems} (as in linear regulator problems, cf.~\cite{Fran77}) of the form
\begin{equation}\label{eq:exo}
    \dot w(t) = A_e w(t),\quad y_{\rm ref}(t) = C_e w(t),\quad w(0)=w^0,
\end{equation}
where the parameters~$A_e\in\R^{k\times k}, C_e\in\R^{m\times k}$ and $w^0\in\R^k$ are known, and $\sigma(A_e)\subseteq \overline{\C_-}$ so that any eigenvalue $\lambda\in\sigma(A_e)\cap i\R$ is semisimple; this guarantees $y_{\rm ref}\in\cW^{r-1,\infty}(\R_{\ge 0}\to\R^m)$ as required in~(A2). In fact, by a Fourier series argument, on any interval of interest we may approximate any given~$y_{\rm ref}$ arbitrarily good by a exosystem~\eqref{eq:exo}, when~$k$ is large enough. We show that then~$\eta^0_{2,\rm ref}$ can be computed using the solution of a certain Sylvester equation.

\begin{Lem}\label{Lem:exo-Sylv}
Consider the exosystem~\eqref{eq:exo} with output~$y_{\rm ref}$, assume that~(A2) holds and use the notation given there. Then the Sylvester equation
\begin{equation}\label{eq:Sylv}
    Q_2 X - X A_e = P_2 C_e
\end{equation}
has a unique solution $X\in\R^{k_2\times k}$ and $\eta_{2,\rm ref}^0$ as in~\eqref{eq:eta-ref-0} is given by
\begin{equation}\label{eq:eta20-exo}
    \eta^0_{2,\rm ref} = W^{-1} \begin{bmatrix} 0_{k_1\times k_2}\\ -I_{k_2}\\ 0_{k_3\times k_2}\end{bmatrix} X w^0.
\end{equation}
\end{Lem}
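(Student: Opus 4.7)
The plan is to carry out a standard Sylvester-equation argument in two parts: first establish existence and uniqueness, then verify that the explicit solution matches the integral formula in~\eqref{eq:eta-ref-0} via an integration-by-parts calculation.

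For the unique solvability of~\eqref{eq:Sylv}, I would invoke the classical fact that a Sylvester equation $AX - XB = C$ has a unique solution if and only if $\sigma(A) \cap \sigma(B) = \emptyset$. Here $\sigma(Q_2) \subseteq \C_+$ by assumption~(A2) and $\sigma(A_e) \subseteq \overline{\C_-}$ by the exosystem hypothesis, so the two spectra are disjoint and the existence/uniqueness claim is immediate.

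For the identification of the solution, I would exhibit $X$ as an explicit improper integral and then match it with~\eqref{eq:eta-ref-0}. Define
\[
  X := \int_0^\infty e^{-Q_2 s} P_2 C_e e^{A_e s} \ds{s}.
\]
The integral converges absolutely because $\sigma(-Q_2) \subseteq \C_-$ gives exponential decay of $\|e^{-Q_2 s}\|$, while the semisimplicity of the imaginary eigenvalues of $A_e$ ensures that $\|e^{A_e s}\|$ is bounded on $\R_{\ge 0}$; hence $\|e^{-Q_2 s} P_2 C_e e^{A_e s}\|$ decays exponentially. A short integration by parts,
\[
  Q_2 X = -\int_0^\infty \tfrac{\rm d}{{\rm d}s}\big(e^{-Q_2 s}\big) P_2 C_e e^{A_e s} \ds{s} = P_2 C_e + X A_e,
\]
where the boundary term at $s=\infty$ vanishes by the decay just noted and the boundary term at $s=0$ yields $P_2 C_e$, shows that $X$ solves~\eqref{eq:Sylv}. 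By uniqueness, this is \emph{the} solution.

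Finally, substituting the exosystem output $y_{\rm ref}(s) = C_e e^{A_e s} w^0$ into the definition of $\eta_{2,\rm ref}^0$ in~\eqref{eq:eta-ref-0} and pulling the constant vector $w^0$ out of the integral gives
\[
  \int_0^\infty e^{-Q_2 s} P_2 y_{\rm ref}(s) \ds{s} = \Bigl(\int_0^\infty e^{-Q_2 s} P_2 C_e e^{A_e s} \ds{s}\Bigr) w^0 = X w^0,
\]
from which~\eqref{eq:eta20-exo} follows at once. I do not anticipate a serious obstacle: the only delicate point is justifying the convergence of the improper integral and the vanishing boundary term at infinity, which hinges on combining the exponential decay coming from $\sigma(Q_2) \subseteq \C_+$ with the semisimplicity assumption on the purely imaginary eigenvalues of $A_e$; everything else is routine linear algebra.
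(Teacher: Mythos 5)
Your proposal is correct and follows essentially the same route as the paper: unique solvability of~\eqref{eq:Sylv} from the disjointness of $\sigma(Q_2)\subseteq\C_+$ and $\sigma(A_e)\subseteq\overline{\C_-}$, followed by an integration by parts showing that $X=\int_0^\infty e^{-Q_2 s}P_2C_e e^{A_e s}\ds{s}$ satisfies the Sylvester equation, and then substitution of $y_{\rm ref}(s)=C_e e^{A_e s}w^0$ into~\eqref{eq:eta-ref-0}. Your explicit justification of the convergence of the improper integral (via boundedness of $e^{A_e s}$ from the semisimplicity assumption and the exponential decay of $e^{-Q_2 s}$) matches the remark the paper makes when discarding the boundary term at infinity.
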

\begin{proof} Since $\sigma(A_e)\cap\sigma(Q_2) = \emptyset$ it follows from~\cite[Thm.~1]{Chu87} that the Sylvester equation~\eqref{eq:Sylv} always has a unique solution $X\in\R^{k_2\times k}$. Now, in view of $y_{\rm ref}(t) = C_e e^{A_e t} w^0$, using integration by parts we find that
\begin{multline*}
    \int_0^\infty - Q_2 e^{-Q_2 s} P_2 C_e e^{A_e s} {\rm d}s + \int_0^\infty  e^{-Q_2 s} P_2 C_e e^{A_e s} A_e {\rm d}s\ \ \\
    = \left[ e^{-Q_2 s} P_2 C_e e^{A_e s} \right]_0^\infty = - P_2 C_e,
\end{multline*}
where the latter equality follows since $s\mapsto  e^{A_e s}$ is bounded and $\lim_{s\to\infty} e^{-Q_2 s} = 0$.  This equation is of the form~\eqref{eq:Sylv}, and uniqueness of~$X$ gives
\[
    X = \int_0^\infty e^{-Q_2 s} P_2 C_e e^{A_e s} {\rm d}s.
\]
Invoking~\eqref{eq:eta-ref-0}, this implies~\eqref{eq:eta20-exo}.
\end{proof}

Note that the Sylvester equation~\eqref{eq:Sylv} can be solved in MATLAB using the \textsc{sylvester} command for instance.

\captionsetup[subfloat]{labelformat=empty}
\begin{figure*}[h!t]
\centering
\resizebox{13cm}{!}{
  \begin{tikzpicture}[very thick,scale=0.7,node distance = 9ex, box/.style={fill=white,rectangle, draw=black}, blackdot/.style={inner sep = 0, minimum size=3pt,shape=circle,fill,draw=black},blackdotsmall/.style={inner sep = 0, minimum size=0.1pt,shape=circle,fill,draw=black},plus/.style={fill=white,circle,inner sep = 0,very thick,draw},metabox/.style={inner sep = 3ex,rectangle,draw,dotted,fill=gray!20!white}]

    \node (sys)     [box,minimum size=10ex,xshift=-1ex]  {$\begin{aligned}
    \dot x(t) &= Ax(t) + Bu(t) +d(t)\\
    y(t) &= Cx(t)
    \end{aligned}
    $};
    \node (dist) [box, above of = sys, yshift=2ex,minimum size=4ex] {$d\in\cL^{\infty}(\R_{\ge 0}\to\R^n)$};
    \node(BIF) [box, right of = sys,xshift=47ex,minimum size=8ex] {$\begin{aligned}
    y^{(r)}(t) &= \sum_{i=1}^r R_i y^{(i-1)}(t) + S \eta(t) + \Gamma u(t) + d_r(t)\\
    \dot \eta(t) &= P y(t) + Q\eta(t) + d_\eta(t)
\end{aligned}$};
    \node(QP) [box, below of = BIF,yshift=-10ex,minimum size=8ex] {$\begin{aligned}
    &\text{obtain $(\tilde Q, \tilde P)$ with}\\
    &\dot \eta_2(t) = \tilde Q \eta_2(t) + \tilde P y(t)
\end{aligned}$};
    \node(ynew) [box, left of = QP,xshift=-47ex,minimum size=4ex] {$y_{\rm new}(t) = K \eta_2(t)$};
    \node(eta) [box, below of = QP,yshift=-8ex,minimum size=8ex] {$\begin{aligned}
    \dot \eta_{2,\rm ref}(t) &= \tilde Q \eta_{2,\rm ref}(t) + \tilde P y_{\rm ref}(t),\quad \eta_{2,\rm ref}(0)=\eta_{2,\rm ref}^0\\
    \hat y_{\rm ref}(t) &= K \eta_{2,\rm ref}(t)
\end{aligned}$};
    \node(yref) [box, below of = eta, yshift=-1ex,minimum size=4ex] {$y_{\rm ref}\in\cW^{r-1,\infty}(\R_{\ge 0}\to\R^m)$};
    \node(FC) [box, left of = eta, xshift=-47ex,minimum size=8ex] {Controller~\eqref{eq:fun-con}};
    \node(phi) [box, below of = FC, yshift=-1ex,minimum size=4ex] {$\varphi_i\in\Phi_{r+\ell-i},\ i=0,\ldots,r+\ell-1$};

    \node (c1)  [left of = ynew, xshift=-8ex, yshift=1ex] {};
    \node (c2)  [left of = ynew, xshift=-10ex, yshift=-1ex] {};

    \draw[->] (sys) -- (BIF) node[midway,above] {Compute~\eqref{eq:BIF}};
    \draw[->] (BIF) -- (QP) node[midway,right] {Check (A1)--(A3)};
    \draw[->] (QP) -- (ynew) node[midway,above] {Compute~$K$ in~\eqref{eq:defK}};
    \draw[->] (ynew) -- (FC) {};
    \draw[->] (QP) -- (eta) node[midway,right] {Compute~$\eta_{2,\rm ref}^0$ in~\eqref{eq:eta-ref-0}};
    \draw[->] (yref) -- (eta) {};
    \draw[->] (phi) -- (FC) {};
    \draw[->] (eta) -- (FC) {};
    \draw[->] (dist) -- (sys) {};
    \path[->,shorten >=4pt] (FC.west) edge[bend left] node[midway,left] {apply} (sys.west);
  \end{tikzpicture}
}
\caption{Construction of the funnel controller~\eqref{eq:fun-con} depending on its design parameters.\\[-4mm]}
\label{Fig:con-constr}
\end{figure*}

The generator~\eqref{eq:new-ref} of the new reference signal will now be incorporated as a dynamic part into the controller design and the funnel controller from~\cite{BergHoan18} will be applied to system~\eqref{eq:ABC} with new output $y_{\rm new}$ as in~\eqref{eq:new-output}. The final controller design is of the form~\eqref{eq:objcontr} and given by:
\begin{equation}\label{eq:fun-con}
\boxed{\begin{aligned}
\dot \eta_{2,\rm ref}(t) &= \tilde Q \eta_{2,\rm ref}(t) + \tilde P y_{\rm ref}(t),\quad \eta_{2,\rm ref}(0)=\eta_{2,\rm ref}^0,\\
    \hat y_{\rm ref}(t) &= K \eta_{2,\rm ref}(t),\\
e_0(t)&= y_{\rm new}(t) - \hat y_{\rm ref}(t),\\
e_1(t)&=\dot{e}_0(t)+k_0(t)\,e_0(t),\\
e_2(t)&=\dot{e}_1(t)+k_1(t)\,e_1(t),\\
&\, \ \vdots \\
e_{r+\ell-1}(t)&=\dot{e}_{r+\ell-2}(t)+k_{r+\ell-2}(t)\,e_{r+\ell-2}(t),\\
k_i(t)&=\frac{1}{1-\varphi_i(t)^2\|e_i(t)\|^2},\quad i=0,\dots,r\!+\!\ell\!-\!1, \\
u(t) &= -k_{r+\ell-1}(t)\,e_{r+\ell-1}(t),
\end{aligned}
}
\end{equation}
where the initial value $\eta_{2,\rm ref}^0$ is as in~\eqref{eq:eta-ref-0} and the reference signal and funnel functions have the following properties:
\begin{equation}\label{eq:con-ass}
\boxed{
\begin{aligned}
y_{\rm ref}&\in\, \mathcal{W}^{r-1,\infty}(\R_{\ge 0}\rightarrow \R^m),\\
\varphi_0&\in\, \Phi_{r+\ell},\;\;
\varphi_1\in \Phi_{r+\ell-1},\;\ldots,\;\;
\varphi_{r+\ell-1}\in \Phi_{1}.\\
\end{aligned}
}
\end{equation}
The construction of the funnel controller~\eqref{eq:fun-con} is summarized in Fig.~\ref{Fig:con-constr}.

We stress that the derivatives $\dot e_0,\ldots, \dot e_{r+\ell-2}$ which appear in~\eqref{eq:fun-con} only serve as short-hand notations and may be resolved in terms of the virtual tracking error~$e_0$, the funnel functions~$\varphi_i$ and the derivatives of these, cf.~\cite[Rem.~2.1]{BergHoan18}. For~\eqref{eq:fun-con} to be robust, it is necessary that $\dot e_0,\ldots, \dot e_{r+\ell-2}$ can be obtained from measurements independent of the disturbance. However, by~\eqref{eq:y-ynew} the derivatives $y_{\rm new}^{(\ell)},\ldots, y_{\rm new}^{(r+\ell-1)}$ depend on $\delta, \dot \delta,\ldots, \delta^{(r-1)}$. Therefore, since $r\ge 1$, we need to require that $\delta = 0$ and hence
\begin{enumerate}
  \item[\textbf{(A3)}] $0 = d_{\eta_2}(\cdot) = [0, I_{\ell m}] T [0, I_{n-rm}] U d(\cdot)$.
\end{enumerate}
This assumption essentially means that the unstable part of the internal dynamics of~\eqref{eq:ABC} is not affected by the disturbance. The controller structure is depicted in Fig.~\ref{Fig:controller}.

\captionsetup[subfloat]{labelformat=empty}
\begin{figure*}[h!t]
\centering
\resizebox{11.5cm}{!}{
  \begin{tikzpicture}[very thick,scale=0.7,node distance = 9ex, box/.style={fill=white,rectangle, draw=black}, blackdot/.style={inner sep = 0, minimum size=3pt,shape=circle,fill,draw=black},blackdotsmall/.style={inner sep = 0, minimum size=0.1pt,shape=circle,fill,draw=black},plus/.style={fill=white,circle,inner sep = 0,very thick,draw},metabox/.style={inner sep = 3ex,rectangle,draw,dotted,fill=gray!20!white}]
 \begin{scope}[scale=0.5]
    \node (sys)     [box,minimum size=9ex,xshift=-1ex]  {$\begin{aligned}
    \dot x(t) &= Ax(t) + Bu(t) + d(t)\\
    y(t) &= Cx(t)\\
    y_{\rm new}(t) &= K\eta_2(t)
\end{aligned}
$};

    \node(FC) [box, below of = sys,yshift=-8ex,minimum size=10ex] {$\begin{aligned} &\text{Funnel Controller}\\ &\text{from~\cite{BergHoan18}}\end{aligned}$};
    \node(fork1) [plus, right of = FC, xshift=15ex] {$+$};
    \node(ref) [box, right of = sys,minimum size=10ex, xshift=37.5ex] {$\begin{aligned}
    \dot \eta_{2,\rm ref}(t) &= \tilde Q \eta_{2,\rm ref}(t) + \tilde P y_{\rm ref}(t)\\
    \hat y_{\rm ref}(t) &= K \eta_{2,\rm ref}(t)\\
    \eta_{2,\rm ref}(0) &=\eta_{2,\rm ref}^0\quad \text{as in~\eqref{eq:eta-ref-0}}
\end{aligned}$};
    \node(fork2) [plus, above of = fork1, yshift=20ex] {$+$};
    \node(forku) [minimum size=0pt, inner sep = 0pt, left of = FC, xshift=-12.5ex] {};
    \node(refin) [minimum size=0pt, inner sep = 0pt, above of = ref, yshift=8ex] {};
    \node(refout) [minimum size=0pt, inner sep = 0pt, above of = fork2, yshift=0ex] {};
    \node (distin) [minimum size=0pt, inner sep = 0pt, above of = sys, yshift=4ex, xshift=-5ex] {};
    \node (distout) [minimum size=0pt, inner sep = 0pt, below of = distin, yshift=1.7ex] {};
    \node (yout) [minimum size=0pt, inner sep = 0pt, right of = distout, xshift=2ex] {};

    \draw[->] (sys) -| (fork1) node[pos=0.3,above] {$y_{\rm new}(t)$} node[pos=0.9,left] {$+$};
    \draw[->] (refin) -- (ref) node[pos=0.3,right] {$y_{\rm ref}(t)$};
    \draw[->] (ref) |- (fork1) node[pos=0.2,right] {$\hat y_{\rm ref}(t)$} node[pos=0.92,above] {$-$};
    \draw[->] (fork1) -- (FC) node[midway,above] {$e_0(t)$};
    \draw (FC) -- (forku.west) {};
    \draw[->] (forku.south) |- (sys) node[pos=0.7,above] {$u(t)$};
    \draw[->] (yout) |- (fork2) node[pos=0.3,left] {$y(t)$} node[pos=0.92,above] {$+$};
    \draw[->] (refin) |- (fork2) node[pos=0.92,above] {$-$};
    \draw[->] (fork2) -- (refout) node[midway,right] {$e(t)$};
    \draw[->] (distin) -- (distout) node[midway,left] {$d(t)$};

\end{scope}
\begin{pgfonlayer}{background}
      \fill[lightgray!20] (5.5,2) rectangle (14,-5.5);
       \fill[lightgray!20] (-3.5,-2) rectangle (14,-5.5);
       \draw[dotted] (-3.5,-2) -- (5.5,-2) -- (5.5,2) -- (14,2) -- (14,-5.5) -- (-3.5,-5.5) -- (-3.5,-2);
  \end{pgfonlayer}
  \end{tikzpicture}
}
\caption{The funnel controller~\eqref{eq:fun-con}, indicated by the grey box, applied to system~\eqref{eq:ABC} with new output as in~\eqref{eq:new-output}. The controller consists of the generator of the new reference signal~\eqref{eq:new-ref} and the funnel controller developed in~\cite{BergHoan18}.\\[-6mm]}
\label{Fig:controller}
\end{figure*}

The application of the controller~\eqref{eq:fun-con} to the linear system~\eqref{eq:ABC} with new output as in~\eqref{eq:new-output} results in a nonlinear and time-varying closed-loop differential equation in general, defined on a proper subset of $\R_{\ge 0}\times\R^{n+\ell m}$ due to the poles introduced by~$k_i$. Hence, some care must be exercised with the existence of a \emph{solution} of~\eqref{eq:ABC},~\eqref{eq:fun-con}, by which we mean a weakly differentiable function $(x,\eta_{2,\rm ref}):[0,\omega)\to\R^{n+\ell m}$, $\omega\in(0,\infty]$, which satisfies the initial conditions and differential equations in~\eqref{eq:ABC},~\eqref{eq:fun-con} for almost all $t\in[0,\omega)$; $(x,\eta_{2,\rm ref})$ is called \emph{maximal}, if it has no right extension that is also a solution.

Concluding this section, we show feasibility of the novel funnel controller design~\eqref{eq:fun-con}, which is one of the main results of the present paper.

\begin{Thm}\label{Thm:fun-con}
Consider a linear system~\eqref{eq:ABC} which satisfies~\eqref{eq:rel-deg} and assumptions~(A1)--(A3). Let $\ell\in\N$ be the smallest number such that~(A1) is satisfied. Further let $y_{\rm ref}, \varphi_0,\ldots, \varphi_{r+\ell-1}$ be as in~\eqref{eq:con-ass} and $x^0\in \R^n$ be an initial value such that $e_0,\ldots, e_{r+\ell-1}$ as defined in~\eqref{eq:fun-con} satisfy
\[
    \varphi_i(0) \|e_i(0)\| < 1\quad \text{for}\ i=0,\ldots,r+\ell-1.
\]
Then the controller~\eqref{eq:fun-con} applied to~\eqref{eq:ABC} yields a closed-loop system which has a unique global solution $(x,\eta_{2,\rm ref}):[0,\infty)\to\R^{n+\ell m}$ with the properties:
\begin{enumerate}
  \item all involved signals $x(\cdot)$, $\eta_{2,\rm ref}(\cdot)$, $u(\cdot)$, $k_0(\cdot), \ldots, k_{r+\ell-1}(\cdot)$ are bounded;
  \item the errors evolve uniformly within the respective performance funnels in the sense
  \begin{equation}\label{eq:error-evlt}
  \begin{aligned}
    \forall\, i=0,\ldots,r+\ell-1\ & \exists\, \eps_i>0\ \forall\, t\ge 0:\\
    & \|e_i(t)\| \le \varphi_i(t)^{-1}-\eps_i.
  \end{aligned}
  \end{equation}
\end{enumerate}
\end{Thm}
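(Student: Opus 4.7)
The plan is to reduce the statement to the feasibility result for the funnel controller with relative degree $r+\ell$ established in~\cite{BergHoan18}, applied to the transformed system with new output~$y_{\rm new}$ and new reference~$\hat y_{\rm ref}$.

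First, I would use assumption~(A1) together with the state-space transformation $U\in\Gl_n(\R)$ leading to the Byrnes-Isidori form~\eqref{eq:BIF}, and the subsequent refinement~\eqref{eq:BIF-refined} induced by the block decomposition of~$Q$. Invoking~(A3) gives $d_{\eta_2}\equiv 0$, hence $\delta\equiv 0$ in the derivations preceding~\eqref{eq:BIF-final}. Therefore the system~\eqref{eq:ABC}, written in the coordinates $(y_{\rm new}, \dot y_{\rm new},\ldots,y_{\rm new}^{(r+\ell-1)},\eta_1)$, takes the form~\eqref{eq:BIF-final} with the disturbance term reduced to $\Gamma^{-1}d_r(\cdot)$ in the first equation and $d_{\eta_1}(\cdot)$ in the second. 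In particular, the internal dynamics of this new system is governed by~$\hat Q_1$, which by~(A1) satisfies $\sigma(\hat Q_1)\subseteq\C_-$; thus the transformed system is minimum phase and falls precisely into the system class treated in~\cite{BergHoan18}.

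Second, by Lemma~\ref{Lem:new-ref} and~\eqref{eq:con-ass}, the generator~\eqref{eq:new-ref} inside the controller produces a reference $\hat y_{\rm ref}\in\mathcal{W}^{r+\ell,\infty}(\R_{\ge 0}\to\R^m)$. Together with $\varphi_i\in\Phi_{r+\ell-i}$ for $i=0,\ldots,r+\ell-1$ and the assumption $\varphi_i(0)\|e_i(0)\|<1$, the hypotheses of the feasibility theorem from~\cite{BergHoan18} for the funnel controller with relative degree $r+\ell$ are met. Applying that theorem directly to the new output $y_{\rm new}$ tracking~$\hat y_{\rm ref}$, I obtain existence of a unique maximal solution, boundedness of $e_0,\ldots,e_{r+\ell-1}$ and the gains $k_0,\ldots,k_{r+\ell-1}$, and the uniform funnel evolution~\eqref{eq:error-evlt}. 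Boundedness of $u=-k_{r+\ell-1}e_{r+\ell-1}$ is immediate, and boundedness of $\eta_{2,\rm ref}$ has already been shown in Lemma~\ref{Lem:new-ref}.

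Third, I would translate these bounds back to the original state. Boundedness of $e_0,\ldots,e_{r+\ell-1}$ combined with boundedness of $\hat y_{\rm ref}$ and its derivatives gives $y_{\rm new},\dot y_{\rm new},\ldots,y_{\rm new}^{(r+\ell-1)}\in\cL^\infty$. By~\eqref{eq:y-ynew} together with $\delta\equiv 0$, this yields boundedness of $\eta_2$ and of $y,\dot y,\ldots,y^{(r-1)}$ (the latter after differentiating~\eqref{eq:y-ynew}). The $\eta_1$-subsystem in~\eqref{eq:BIF-final} is then a linear ODE with Hurwitz matrix $\hat Q_1$ driven by bounded signals, so $\eta_1\in\cL^\infty$. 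Undoing the transformations~$T$ and~$U$ produces boundedness of the full state $x$. Finally, global existence of the solution follows by the standard funnel-control argument: on a maximal solution interval $[0,\omega)$, boundedness of all involved quantities and the uniform distance of $(t,e_i(t))$ from the funnel boundary rule out a finite-time escape via~\cite[Thm.~2.2]{BergHoan18}, so $\omega=\infty$.

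The main obstacle is bookkeeping rather than genuine difficulty: one must verify that the coordinate change from $x$ to $(y_{\rm new},\ldots,y_{\rm new}^{(r+\ell-1)},\eta_1)$ together with the dynamic extension by $\eta_{2,\rm ref}$ produces exactly the closed-loop differential equation to which the feasibility result of~\cite{BergHoan18} applies, and that the smoothness of $\hat y_{\rm ref}$ (which requires $y_{\rm ref}\in\mathcal{W}^{r-1,\infty}$ rather than the seemingly needed $\mathcal{W}^{r+\ell-1,\infty}$) is correctly supplied by~Lemma~\ref{Lem:new-ref}; this is where the specific choice of initial value~\eqref{eq:eta-ref-0} enters in an essential way.
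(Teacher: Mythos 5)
Your proposal is correct and follows essentially the same route as the paper: verify via (A1)--(A3) and the computations of Section~\ref{Sec:TrackAss} that the system with output~$y_{\rm new}$ is of the form~\eqref{eq:BIF-final} with Hurwitz internal dynamics, invoke Lemma~\ref{Lem:new-ref} for $\hat y_{\rm ref}\in\cW^{r+\ell,\infty}$, and apply the feasibility theorem of~\cite{BergHoan18}. The only minor discrepancy is that uniqueness of the solution is not delivered by~\cite[Thm.~3.1]{BergHoan18} itself; the paper obtains it separately from standard ODE theory, since the closed-loop right-hand side is locally Lipschitz in the state variables.
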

\begin{proof} By assumptions~\eqref{eq:rel-deg},~(A1) and (A2) and the calculations made in Section~\ref{Sec:TrackAss} we find that system~\eqref{eq:ABC} with new output~\eqref{eq:new-output} is equivalent to~\eqref{eq:BIF-final} and, in particular, it has strict relative degree $r+\ell$. Since $\sigma(\hat Q_1)\subseteq \C_-$, $\delta=0$ by~(A3) and $d_r, d_{\eta_1}$ are bounded it is straightforward to see that~\eqref{eq:BIF-final} belongs to the system class discussed in~\cite{BergHoan18}. Furthermore, the new reference signal $\hat y_{\rm ref}$ generated by~\eqref{eq:new-ref} satisfies $\hat y_{\rm ref}\in\mathcal{W}^{r+\ell,\infty}(\R_{\ge 0}\to\R^m)$ by Lemma~\ref{Lem:new-ref}. Therefore, we may apply~\cite[Thm.~3.1]{BergHoan18} to~\eqref{eq:ABC} with new output~\eqref{eq:new-output} and new reference signal $\hat y_{\rm ref}$, which implies the statements of the theorem, except for uniqueness of the solution $(x,\eta_{2,\rm ref})$. However, the latter follows from the theory of ordinary differential equations, see e.g.~\cite[\S~10, Thm.~XX]{Walt98}, since the right-hand side of the closed-loop differential equation is measurable and locally integrable in~$t$ and locally Lipschitz in the other variables.
\end{proof}

\begin{Rem}
  We like to emphasize that the controller~\eqref{eq:fun-con} depends on the initial~$\eta_{2,\rm ref}^0$ which must be computed as in~\eqref{eq:eta-ref-0} in order to obtain $\hat y_{\rm ref}\in\cW^{r+l,\infty}(\R_{\ge 0}\to\R^m)$ so that the control is feasible. If a small error is made in this computation, say $\hat \eta_{2,\rm ref}^0 = \eta_{2,\rm ref}^0 + \cE$ is computed, where $\cE\in\R^{\ell m}$ with $\|\cE\|\le \varepsilon$ for some $\varepsilon>0$, then $\hat y_{\rm ref}$ may not be bounded, and hence the controller does not provide a bounded global solution in general (although a global solution still exists). The reason is that the unstable part of the internal dynamics may amplify the error, i.e., we have (with $\tilde Q = Q_2$ and $\tilde P = P_2$ for simplicity), $\eta_2(t) = p(t) + e^{Q_2 t} \cE$, where $p(t) =  e^{Q_2 t} \eta_{2,\rm ref}^0 + \int_0^t e^{Q_2(t-s)} P_2 y_{\rm ref}(s) \ds{s}$ is bounded by Lemma~\ref{Lem:new-ref}, but $t\mapsto e^{Q_2 t} \cE$ grows unbounded. One possibility to resolve this is to recalculate the value of~$\eta_{2,\rm ref}^0$ as in~\eqref{eq:eta-ref-0} at discrete time points $t_n = n T$ for $n\in\N$ and fix $T>0$. If the error $\hat \eta_{2,\rm ref}^n = \eta_{2,\rm ref}^n + \cE_n$ made each time satisfies $\|\cE_n\| \le \varepsilon$, then the correction term is bounded,
  \[
   \forall\,n\in\N\ \forall\, t\in [nT, (n+1)T]:\ \left\| e^{Q_2 (t- nT)} \cE_n\right\| \le e^{\|Q_2\| T} \varepsilon,
  \]
  and hence $\eta_2$ and $\hat y_{\rm ref}$ are globally bounded; note that this is independent of the choice of $T>0$ and $\varepsilon>0$, and the latter does not even need to be known. Therefore, accordingly restarting the funnel controller~\eqref{eq:fun-con} at each time $t_n = nT$ and ensuring that $\varphi_i(t_n) \| e_i(t_n)\| < 1$ is satisfied for $i=0,\ldots,r+\ell-1$, $n\in\N$, guarantees existence of a global bounded solution of the closed-loop system such that~\eqref{eq:error-evlt} is satisfied; this relies on estimating the input by the global bound derived in~\cite[Prop.~3.2]{BergHoan18} on each interval $[nT, (n+1)T]$ using the uniform bound for~$\hat y_{\rm ref}$ from above.
\end{Rem}

We stress that Theorem~\ref{Thm:fun-con} does not provide a bound for the original tracking error~$e(t)$; this will be discussed in the subsequent section.

%
\section{Transient behavior of the original tracking error}\label{Sec:TransErr}
%

In this section we provide a bound for the transient behavior of the tracking error~$e(t)$, which may be calculated a priori and can be adjusted using the funnel functions~$\varphi_0,\ldots,\varphi_\ell$. To obtain a reasonable bound we first need to improve the estimate~\eqref{eq:error-evlt} in the sense that at each time $t\ge 0$ we need to find ``the best'' $\eps_i(t)$ such that $\|e_i(t)\|\le \varphi_i(t)^{-1}-\eps_i(t)$; still ensuring that~$\eps_i(\cdot)$ can be calculated a priori. One possible choice for (constant)~$\eps_i$ is provided in~\cite{BergHoan18}, but this choice is far from being optimal. In the following we derive an improvement of this.

To this end, use the notation and assumptions from Theorem~\ref{Thm:fun-con}, and set $\psi_i(t):=\varphi_i(t)^{-1}$ for all $t\geq 0$ and all $i=0,\ldots,r+\ell-1$. Then, by~\eqref{eq:con-ass}, $\psi_i:\R_{\ge 0}\to\R_{>0}$ is continuously differentiable and $\dot\psi_i$ is bounded, $i=0,\ldots,r+\ell-1$. Consider the initial value problems
\begin{equation}\label{eq:ODE-eps_i}
\begin{aligned}
  \dot \eps_i(t) &= \dot \psi_i(t) - \psi_{i+1}(t) + \frac{\psi_i(t)\big(\psi_i(t) - \eps_i(t)\big)}{2\eps_i(t)},\\
  \eps_i(0) &= \psi_i(0) - \|e_i(0)\|
\end{aligned}
\end{equation}
for $i=0,\ldots,r+\ell-2$. In the following result we show that~\eqref{eq:ODE-eps_i} indeed has a unique global solution and provide some bounds for it.

\begin{Lem}\label{Lem:est-eps_i}
Use the notation and assumptions from Theorem~\ref{Thm:fun-con}. Set
\begin{align*}
    \lambda_i &:=\inf_{t\geq 0}\psi_i(t)>0,\quad i=0,\ldots,r+\ell-1,\\
    \kappa_i &:= \|\psi_{i+1} - \dot \psi_i\|_\infty,\quad i=0,\ldots,r+\ell-2,\\
    \eps_{i,\min} &:= \min\left\{\tfrac{\la_i^2}{2\kappa_i + \|\psi_i\|_\infty}, \psi_i(0) - \|e_i(0)\|\right\} > 0,\\
    \eps_{i,\max} &:= \min\left\{\tfrac{\la_{i+1} \la_i}{\|\psi_i\|_\infty}, \tfrac{\la_i}{2}, \|e_i(0)\|\right\} \ge 0.
\end{align*}
Then, for all $i=0,\ldots,r+\ell-2$, the initial value problem~\eqref{eq:ODE-eps_i} has a unique global solution $\eps_i:\R_{\ge 0}\to\R$ that satisfies
\begin{equation}\label{eq:est-eps_i}
   \forall\, t\ge 0:\ \eps_{i,\min} \le \eps_i(t) \le \psi_i(t) - \eps_{i,\max}.
\end{equation}
\end{Lem}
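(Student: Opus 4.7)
The plan is to treat \eqref{eq:ODE-eps_i} as a scalar nonlinear ODE on the open region $\{(t,\eps) : t \ge 0,\ \eps > 0\}$, where the right-hand side is continuous in $t$ (thanks to $\varphi_i \in \Phi_{r+\ell-i}$, which makes $\psi_i, \dot\psi_i, \psi_{i+1}$ continuous and bounded) and locally Lipschitz in $\eps$, and to show that the closed strip $\{\eps_{i,\min} \le \eps \le \psi_i(t) - \eps_{i,\max}\}$ is positively invariant along the solution. Local existence and uniqueness then come from Picard--Lindel\"of, and global existence follows because this strip stays uniformly away from the only singularity $\eps = 0$ and the right-hand side is uniformly bounded on it.

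First I would verify admissibility of the initial datum: $\eps_i(0) = \psi_i(0) - \|e_i(0)\| > 0$ by the hypothesis $\varphi_i(0)\|e_i(0)\| < 1$ of Theorem~\ref{Thm:fun-con}, and the inclusion $\eps_{i,\min} \le \eps_i(0) \le \psi_i(0) - \eps_{i,\max}$ is directly built into the ``$\min$'' appearing in the definitions of $\eps_{i,\min}$ and $\eps_{i,\max}$. Strict positivity of $\eps_{i,\min}$ follows from $\lambda_i > 0$ together with finiteness of $\|\psi_i\|_\infty$ and $\kappa_i$, again guaranteed by $\varphi_i \in \Phi_{r+\ell-i}$.

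For the lower barrier I would observe that $\eps \mapsto \psi_i(t)(\psi_i(t)-\eps)/(2\eps) = \psi_i(t)^2/(2\eps) - \psi_i(t)/2$ is strictly decreasing on $(0,\psi_i(t)]$ and that, for $0 < \eps \le \eps_{i,\min} \le \lambda_i \le \psi_i(t)$,
\[
\frac{\psi_i(t)(\psi_i(t)-\eps)}{2\eps} \;\ge\; \frac{\lambda_i(\lambda_i-\eps)}{2\eps} \;\ge\; \frac{\lambda_i(\lambda_i-\eps_{i,\min})}{2\eps_{i,\min}} \;\ge\; \kappa_i,
\]
where the last inequality rearranges to $\eps_{i,\min}(\lambda_i+2\kappa_i) \le \lambda_i^2$ and follows from $\eps_{i,\min} \le \lambda_i^2/(2\kappa_i+\|\psi_i\|_\infty)$ together with $\lambda_i \le \|\psi_i\|_\infty$. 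Combined with $\dot\psi_i(t) - \psi_{i+1}(t) \ge -\kappa_i$, this yields $\dot\eps_i \ge 0$ at the barrier $\eps = \eps_{i,\min}$ and \emph{strictly} $\dot\eps_i > 0$ for $0 < \eps < \eps_{i,\min}$ (by strict monotonicity of the rational term). A standard first-exit-time contradiction then rules out $\eps_i(t) < \eps_{i,\min}$ for any $t$ in the maximal existence interval.

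For the upper barrier I would pass to $g_i := \psi_i - \eps_i$, which satisfies $\dot g_i = \psi_{i+1} - \psi_i g_i/(2\eps_i)$. At $g_i = \eps_{i,\max}$ one has $\eps_i = \psi_i - \eps_{i,\max} \ge \psi_i/2$ since $\eps_{i,\max} \le \lambda_i/2 \le \psi_i(t)/2$, and hence
\[
\frac{\psi_i(t)\,g_i}{2\eps_i}\bigg|_{g_i = \eps_{i,\max}} \;\le\; \eps_{i,\max} \;\le\; \frac{\lambda_{i+1}\lambda_i}{\|\psi_i\|_\infty} \;\le\; \lambda_{i+1} \;\le\; \psi_{i+1}(t),
\]
giving $\dot g_i \ge 0$ at the barrier and strictly positive below it by the same monotonicity argument. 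A second barrier argument then yields $g_i(t) \ge \eps_{i,\max}$, i.e.\ $\eps_i(t) \le \psi_i(t) - \eps_{i,\max}$. Combining both bounds confines the local solution to a compact strip on which the right-hand side of \eqref{eq:ODE-eps_i} is Lipschitz and uniformly bounded, so the local solution extends uniquely to all of $\R_{\ge 0}$. The main obstacle is that each barrier estimate only delivers $\dot\eps_i \ge 0$ (non-strict) at the boundary---equality can genuinely occur, e.g.\ when $\lambda_i = \|\psi_i\|_\infty$---so the argument must lean on the \emph{strict} sign of $\dot\eps_i$ after a hypothetical crossing into the forbidden region, rather than on a sign-based transversality estimate on the boundary itself.
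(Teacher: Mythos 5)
Your proposal is correct and follows essentially the same route as the paper: local existence/uniqueness from local Lipschitz continuity on $\{\eps>0\}$, the two barrier bounds established by a first-crossing-time contradiction using exactly the constants $\eps_{i,\min}$ and $\eps_{i,\max}$ (your substitution $g_i=\psi_i-\eps_i$ is just a repackaging of the paper's step $\dot\eps_i\le\dot\psi_i$ followed by integration), and global existence from confinement to a compact strip bounded away from the singularity. The only cosmetic difference is that the paper gets by with the non-strict inequality $\dot\eps_i\ge 0$ (resp.\ $\dot\eps_i\le\dot\psi_i$) throughout the forbidden region, so the strictness you emphasize at the end is not actually needed.
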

\begin{proof} Since the right hand side of the differential equation in~\eqref{eq:ODE-eps_i} is measurable and locally integrable in~$t$ and locally Lipschitz in~$\eps_i$ (as a function defined on the relatively open set $\setdef{(t,\eps)\in\R_{\ge 0}\times\R}{\eps>0}$), it follows from the theory of ordinary differential equations, see e.g.~\cite[\S~10, Thm.~XX]{Walt98}, that~\eqref{eq:ODE-eps_i} has a unique maximal solution $\eps_i:[0,\omega)\to\R$ with $\omega\in(0,\infty]$, such that~$\eps_i$ is weakly differentiable and $\eps_i(t)>0$ for all $t\in[0,\omega)$. Furthermore,  the closure of the graph of~$\eps_i$ is not a compact subset of $\setdef{(t,\eps)\in\R_{\ge 0}\times\R}{\eps>0}$. It remains to show $\omega=\infty$ and~\eqref{eq:est-eps_i}.

We first show that $\eps_{i,\min} \le \eps_i(t)$ for all $t\in[0,\omega)$. Seeking a contradiction assume that there exists $t_1\in[0,\omega)$ such that $0<\eps_i(t_1) < \eps_{i,\min}$. Since $\eps_i(0) \ge \eps_{i,\min}$ there exists
\[
    t_0 := \max \setdef{t\in [0,t_1)}{ \eps_i(t) = \eps_{i,\min}},
\]
and we find that $\eps_i(t)\le \eps_{i,\min}$ for all $t\in[t_0,t_1]$. Then it follows that
\begin{align*}
    \dot \eps_i(t) &= \dot \psi_i(t) - \psi_{i+1}(t) + \frac{\psi_i(t)\big(\psi_i(t) - \eps_i(t)\big)}{2\eps_i(t)}\\
    &\ge -\kappa_i + \frac{\la_i^2}{2 \eps_{i,\min}} - \frac{\|\psi_i\|_\infty}{2} \ge 0
\end{align*}
for almost all $t\in [t_0,t_1]$. Therefore,
\[
  \eps_{i,\min} = \eps_i(t_0) \le  \eps_i(t_1) < \eps_{i,\min},
\]
a contradiction.

Now we show that $\eps_i(t) \le \psi_i(t) - \eps_{i,\max}$ for all $t\in[0,\omega)$. Again seeking a contradiction assume that there exists $t_1\in[0,\omega)$ such that $\eps_i(t_1) > \psi_i(t_1) - \eps_{i,\max}$. Since $\eps_i(0) = \psi_i(0) - \|e_i(0)\| \le \psi_i(0) - \eps_{i,\max}$ there exists
\[
    t_0 := \max \setdef{t\in [0,t_1)}{ \eps_i(t) = \psi_i(t) - \eps_{i,\max}}.
\]
Then it follows that $\eps_i(t) \ge \psi_i(t) - \eps_{i,\max} \ge \tfrac{\la_i}{2}$ for all $t\in[t_0,t_1]$ and hence
\begin{align*}
    \dot \eps_i(t) &= \dot \psi_i(t) - \psi_{i+1}(t) + \frac{\psi_i(t)\big(\psi_i(t) - \eps_i(t)\big)}{2\eps_i(t)}\\
    &\le \dot \psi_i(t) - \la_{i+1} + \frac{\|\psi_i\|_\infty\, \eps_{i,\max}}{\la_i} \le   \dot \psi_i(t)
\end{align*}
for almost all $t\in [t_0,t_1]$. Therefore, $\eps_i(t_1) - \eps_i(t_0) \le \psi_i(t_1) - \psi_i(t_0)$ which gives
\[
  \eps_{i,\max} = \psi_i(t_0) - \eps_i(t_0) \le  \psi_i(t_1) - \eps_i(t_1) < \eps_{i,\max},
\]
a contradiction.

To see that $\omega=\infty$, assume $\omega<\infty$ which, invoking $\eps_{i,\min} \le \eps_i(t) \le \psi_i(t) - \eps_{i,\max}$ for all $t\in[0,\omega)$, implies that the closure of the graph of the solution~$\eps_i$ is a compact subset of $\setdef{(t,\eps)\in\R_{\ge 0}\times\R}{\eps>0}$, a contradiction.
\end{proof}

Note that in~\eqref{eq:est-eps_i} it is possible that $\eps_{i,\max}=0$, which is the case if, and only if, $e_i(0)=0$.

\begin{Ex}
  We illustrate a typical situation, where the funnel functions $\psi_i(\cdot) = \varphi_i(\cdot)^{-1}$ are of exponential decreasing form $\psi_i(t) = a_i e^{-b_i t} + c_i$. Here we choose
  \[
    \psi_0(t) = e^{-2t} + 0.1,\quad \psi_1(t)=2 e^{-2t} + 0.02,
  \]
  and $\|e_i(0)\| = \psi_0(0)/2 = 0.55$. The solution~$\eps_0$ of~\eqref{eq:ODE-eps_i} for $i=0$ is depicted in Fig.~\ref{fig:eps}.

\begin{figure}[h!tb]
\hspace{3mm}\includegraphics[trim=20 10 20 10,clip,width=7.5cm]{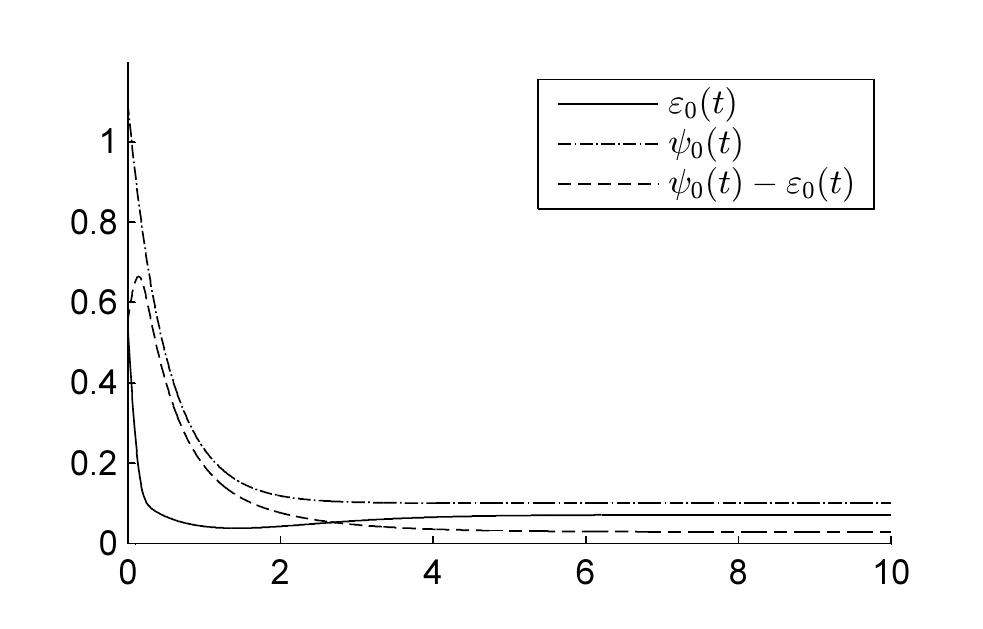}
\vspace{-2mm}
\caption{Solution~$\eps_0$ of~\eqref{eq:ODE-eps_i} for $i=0$.}
\label{fig:eps}
\end{figure}
\end{Ex}

Using the solutions of~\eqref{eq:ODE-eps_i} we may now improve the estimates~\eqref{eq:error-evlt} from Theorem~\ref{Thm:fun-con}.

\begin{Lem}
Use the notation and assumptions from Theorem~\ref{Thm:fun-con}. Let $(x,\eta_{2,\rm ref}):\R_{\ge 0}\to\R^{n+\ell m}$
be the solution of~\eqref{eq:ABC},~\eqref{eq:fun-con} and let $\eps_i:\R_{\ge 0}\to\R$ be the solution of~\eqref{eq:ODE-eps_i} for $i=0,\ldots,r+\ell-2$. Then we have that
\begin{equation}\label{eq:est-e_i-eps}
  \forall\,i=0,\ldots,r+\ell-2\ \forall\, t\ge 0:\ \|e_i(t)\| \le \psi_i(t) - \eps_i(t).
\end{equation}
\end{Lem}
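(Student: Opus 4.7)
The approach is a comparison argument, performed separately for each index $i\in\{0,\ldots,r+\ell-2\}$, which exploits the fact that the right-hand side of~\eqref{eq:ODE-eps_i} was engineered precisely so that $\alpha_i(t):=\psi_i(t)-\eps_i(t)$ serves as an envelope for $\|e_i(t)\|$. As preliminary observations I would record that $\alpha_i(0)=\|e_i(0)\|$ by the initial condition in~\eqref{eq:ODE-eps_i}, and that differentiating the definition of $\alpha_i$ together with the ODE in~\eqref{eq:ODE-eps_i} gives
\[
  \dot\alpha_i(t) \;=\; \psi_{i+1}(t) - \frac{\psi_i(t)\,\alpha_i(t)}{2\eps_i(t)}.
\]

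Next I would derive a differential inequality for $\|e_i(\cdot)\|$. The recursion in~\eqref{eq:fun-con} yields $\dot e_i(t)=e_{i+1}(t)-k_i(t)\,e_i(t)$ almost everywhere; at any point where $e_i(t)\neq 0$, the Cauchy--Schwarz inequality combined with the funnel bound $\|e_{i+1}(t)\|\le\psi_{i+1}(t)$ from Theorem~\ref{Thm:fun-con} (applied just to the next index, so no induction is required) yields
\[
  \tfrac{d}{dt}\|e_i(t)\| \;\le\; \psi_{i+1}(t) - k_i(t)\,\|e_i(t)\|.
\]

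The main step is a standard contradiction. Suppose $\|e_i(t_1)\|>\alpha_i(t_1)$ for some $t_1>0$, and let $t_0\in[0,t_1)$ be the largest time at which $\|e_i(t_0)\|=\alpha_i(t_0)$; continuity together with $\alpha_i(0)=\|e_i(0)\|$ makes this well defined. On $(t_0,t_1]$ one has $\|e_i(t)\|>\alpha_i(t)\ge\eps_{i,\max}\ge 0$, hence $\|e_i(t)\|>0$, so the differential inequality above applies. The crucial computation is that on this interval $\psi_i(t)-\|e_i(t)\|<\eps_i(t)$ and $\psi_i(t)+\|e_i(t)\|<2\psi_i(t)$, whence
\[
  k_i(t)\,\|e_i(t)\| = \frac{\psi_i(t)^2\,\|e_i(t)\|}{\big(\psi_i(t)-\|e_i(t)\|\big)\big(\psi_i(t)+\|e_i(t)\|\big)} \;>\; \frac{\psi_i(t)\,\alpha_i(t)}{2\eps_i(t)}.
\]
Combined with the differential inequality this forces $\tfrac{d}{dt}\|e_i(t)\|<\dot\alpha_i(t)$ a.e.\ on $(t_0,t_1]$; integrating and using $\|e_i(t_0)\|=\alpha_i(t_0)$ produces $\|e_i(t_1)\|<\alpha_i(t_1)$, contradicting the choice of~$t_1$.

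The one point demanding care, and the main technical obstacle, is the passage from the weak ODE $\dot e_i=e_{i+1}-k_i e_i$ to a pointwise inequality for $\tfrac{d}{dt}\|e_i\|$, given that $e_i$ is only absolutely continuous. This is handled by confining the analysis to $(t_0,t_1]$, on which $\|e_i\|$ is bounded away from zero and is itself absolutely continuous with a.e.\ derivative $\langle e_i,\dot e_i\rangle/\|e_i\|$, which legitimises the Cauchy--Schwarz estimate and the final integration.
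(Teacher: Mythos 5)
Your proposal is correct and follows essentially the same argument as the paper: the same contradiction setup with $t_0:=\max\setdef{t\in[0,t_1)}{\|e_i(t)\|=\psi_i(t)-\eps_i(t)}$, the same lower bound on $k_i(t)\|e_i(t)\|$ via $\psi_i-\|e_i\|<\eps_i$ and $\psi_i+\|e_i\|\le 2\psi_i$, and the same comparison with the right-hand side of~\eqref{eq:ODE-eps_i} followed by integration. The only cosmetic difference is that you differentiate $\|e_i\|$ directly (naming the envelope $\alpha_i=\psi_i-\eps_i$) whereas the paper works with $\tfrac12\ddt\|e_i\|^2$ and divides by $\|e_i\|$ under the integral.
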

\begin{proof} Let $i\in\{0,\ldots,r+\ell-2\}$. Seeking a contradiction assume that there exists $t_1> 0$ such that $\|e_i(t_1)\| > \psi_i(t_1) - \eps_i(t_1)$. Since we have $\eps_i(0) = \psi_i(0) - \|e_i(0)\|$ it follows that $t_0 := \max \setdef{t\in [0,t_1)}{ \|e_i(t)\| = \psi_i(t) - \eps_i(t)}$ is well defined. Therefore,
\begin{align*}
\|e_i(t)\| &> \psi_i(t) - \eps_i(t)\stackrel{\eqref{eq:est-eps_i}}{\ge} \eps_{i,\max} \ge 0,\\
    k_i(t) &= \frac{1}{1-\varphi_i(t)^2\|e_i(t)\|^2} = \frac{\psi_i(t)^2}{\psi_i(t)^2-\|e_i(t)\|^2}\\
    &\ge \frac{\psi_i(t)}{2 \big(\psi_i(t)-\|e_i(t)\|\big)} \ge \frac{\psi_i(t)}{2 \eps_i(t)}
\end{align*}
for all $t\in(t_0,t_1]$. Hence we find that by~\eqref{eq:fun-con}
\begin{align*}
  \tfrac12 \ddt \|e_i(t)\|^2 &= -k_i(t) \|e_i(t)\|^2 + e_{i+1}(t)^\top e_i(t)\\
  &\le \left( -\frac{\psi_i(t)}{2 \eps_i(t)} \big(\psi_i(t) - \eps_i(t)\big) + \psi_{i+1}(t)\right) \|e_i(t)\|\\
  &\stackrel{\eqref{eq:ODE-eps_i}}{=} \big(\dot \psi_i(t) - \dot \eps_i(t)\big) \|e_i(t)\|
\end{align*}
for almost all $t\in (t_0,t_1]$. Then we have
\begin{align*}
  \|e_i(t_1)\| - \|e_i(t_0)\| &= \int_{t_0}^{t_1} \tfrac12 \|e_i(t)\|^{-1} \ddt \|e_i(t)\|^2 \ds{t} \\
  &\le \int_{t_0}^{t_1} \big(\dot \psi_i(t) - \dot \eps_i(t)\big) \ds{t} \\
  &=  \big(\psi_i(t_1) - \eps_i(t_1)\big) - \big(\psi_i(t_0) - \eps_i(t_0)\big),
\end{align*}
and thus
\[
    0 = \psi_i(t_0) - \eps_i(t_0) - \|e_i(t_0)\| \le  \psi_i(t_1) - \eps_i(t_1) - \|e_i(t_1)\| < 0,
\]
a contradiction.
\end{proof}

\begin{Rem}
We like to emphasize that the estimate~\eqref{eq:est-e_i-eps} also holds for the class of general nonlinear systems of functional differential equations considered in~\cite{BergHoan18} and the proof is the same as given above. This improves the result of~\cite[Thm.~3.1]{BergHoan18}.
\end{Rem}

In the following we utilize the estimate~\eqref{eq:est-e_i-eps} to obtain a bound for the original tracking error~$e(t)$. To this end, we need to introduce some additional notation which is motivated by~\cite[Prop.~3.2]{BergHoan18}. Let $\eps_0,\ldots,\eps_{r+\ell-2}$ be the solutions of~\eqref{eq:ODE-eps_i} and fix $t\ge 0$. Set $N_{i,0}(t) := \psi_{i}(t)$ for $i=0,\ldots,r+\ell-1$ and
\[
    K_{i,0}(t):= \frac{N_{i,0}(t)}{\eps_i(t)},\qquad M_{i,0}(t):= N_{i,0}(t) K_{i,0}(t)
\]
for $i=0,\ldots,r+\ell-2$. Define, for $i=0,\ldots,r+\ell-2$ and $j=0,\ldots,r+\ell-i-1$,
\begin{align*}
  &N_{i,j}(t):= N_{i+1,j-1}(t) + M_{i,j-1}(t),\\
  & L_{i,0}(t):= N_{i,0}(t)^2,\\
  &L_{i,j}(t):=2 \sum_{l=0}^{j-1} \tbinom{j-1}{l} N_{i,l}(t) N_{i,j-l}(t),\\
  &\Phi_{i,0}(t):= \varphi_i(t)^2,\\
  &\Phi_{i,j}(t):=2 \sum_{l=0}^{j-1} \tbinom{j-1}{l} |\varphi_i^{(l)}(t)|\cdot |\varphi_i^{(j-l)}(t)|,\\
  &\Sigma_{i,j}(t):= \tfrac12 \Big( \Phi_{i,0}(t) L_{i,j+1}(t) \!+\! \Phi_{i,1}(t) L_{i,j}(t) \!+\! \Phi_{i,j}(t) L_{i,1}(t)\\
  &\ + L_{i,0}(t) \Phi_{i,j+1}(t)\Big) \\
  &\ + \sum_{l_1=1}^{j-1} \tbinom{j}{l_1} \left( \Phi_{i,l_1}(t) \sum_{l_2=0}^{j-l_1} \tbinom{j-l_1}{l_2} N_{i,l_2}(t) N_{i,j-l_1-l_2}(t)\right.\\
  &\ \left.+ L_{i,j-l_1}(t) \sum_{l_2=0}^{l_1} \tbinom{l_1}{l_2} |\varphi_i^{(l_2)}(t)|\cdot |\varphi_i^{(l_1-l_2)}(t)|\right),\\
  &K_{i,j}(t):= K_{i,0}(t)^2 \Sigma_{i,j-1}(t)\\
  &\ + \sum_{l_1=1}^{j-1} \sum_{l_2=0}^{l_1-1} \tbinom{j-1}{l_1} \tbinom{l_1-1}{l_2} \Sigma_{i,j-l_1-1}(t) K_{i,l_2+1}(t) K_{i,l_1-l_2-1}(t),\\
  &M_{i,j}(t) := \sum_{l=0}^j \tbinom{j}{l} K_{i,l}(t) N_{i,j-l}(t).
\end{align*}
Finally, set
\begin{equation}\label{eq:def-hatKi}
\begin{aligned}
    \hat K_{-1}(t) &:= 0,\\
    \hat K_i(t) &:= \sum_{j=0}^i M_{j,i-j}(t)\quad\text{for}\ \ i=0,\ldots,r+\ell-2.
\end{aligned}
\end{equation}
We stress that $\hat K_i$ depends only on $\varphi_0,\ldots, \varphi_{i+1}$ and the initial errors $e_0(0),\ldots, e_i(0)$, and it is always possible to shape the funnel boundaries $\psi_0,\ldots,\psi_{i+1}$ accordingly to achieve that~$\hat K_i$ is as small as desired. To illustrate this, we calculate that $\hat K_0 = \frac{\psi_0(t)^2}{\varepsilon_0(t)}$ and
\begin{align*}
    \hat K_1(t)  &=  M_{0,1}(t) + M_{1,0}(t)\\
    &= \frac{\psi_0(t)}{\varepsilon_0(t)} \left(\psi_1(t) + \frac{\psi_0(t)^2}{\varepsilon_0(t)}\right) \left(1 + \frac{2\psi_0(t)}{\varepsilon_0(t)}\right) \\
     &\quad + \frac{\psi_1(t)^2}{\varepsilon_1(t)} + \frac{2\psi_0(t)^4 |\dot \varphi_0(t)|}{\eps_0(t)^2}.
\end{align*}
Now, we see that~$\hat K_0(t)$ and~$\hat K_1(t)$ are small provided that~$\psi_0(t), \psi_1(t), |\dot \varphi_0(t)|$ and the ratios $\frac{\psi_0(t)}{\varepsilon_0(t)}$, $\frac{\psi_1(t)}{\varepsilon_1(t)}$ are small. Note that~$\varepsilon_i$ depends on~$\psi_i$,~$\psi_{i+1}$ and~$\|e_i(0)\|$ by~\eqref{eq:ODE-eps_i}. Investigating the behavior for large times~$t$, we find that for typical funnel functions (or, least, they may be designed in this way) we have that $\dot \psi_i(t)\approx 0$, $\psi_i(t)\approx \la_i$ and $\psi_{i+1}(t)\approx \la_{i+1}$, where we use the notation from Lemma~\ref{Lem:est-eps_i}. As a consequence, from~\eqref{eq:ODE-eps_i} we obtain that
\[
    \dot \eps_i(t) \approx -\la_{i+1} + \frac{\la_i (\la_i - \eps_i(t))}{2\eps_i(t)} = - \frac{(2\la_{i+1} + \la_i) \eps_i(t) - \la_i^2}{2\eps_i(t)},
\]
which is solved by $\eps_i(t) = \frac{\la_i^2}{2\la_{i+1}+\la_i}$, thus~$\eps_i$ is approximately equal to this constant for large~$t$. Therefore, we have
\[
    \frac{\psi_i(t)}{\eps_i(t)} \approx \frac{\la_i (2\la_{i+1} + \la_i)}{\la_i^2} = \frac{2\la_{i+1}}{\la_i} + 1.
\]
As a consequence, if $\la_{i+1}\le \beta \la_i$ for some $\beta>0$ and all $i=0,\ldots,r+\ell-2$, then we may guarantee that the ratios $\frac{\psi_i(t)}{\varepsilon_i(t)}$ stay below some a priori known constant for large~$t$ and hence, in order to make~$\hat K_i(t)$ smaller it is indeed sufficient to make $\la_0,\ldots,\la_{i+1}$ smaller in a way that $\la_{j+1}\le \beta \la_j$ is still guaranteed for $j=0,\ldots,i$.

In the example above, choosing $\psi_0(t), \psi_1(t)$ small with $\la_1\le \beta \la_0$ and $\la_2\le \beta\la_1$ while keeping $|\dot \varphi_0(t)|$ small (for~$t$ large enough) establishes any desired quantity for~$\hat K_0(t)$ and~$\hat K_1(t)$.

\begin{Thm}\label{Thm:fun-est-e}
Use the notation and assumptions from Theorem~\ref{Thm:fun-con}.  Let $(x,\eta_{2,\rm ref}):\R_{\ge 0}\to\R^{n+\ell m}$
be the solution of~\eqref{eq:ABC},~\eqref{eq:fun-con}. Then the original tracking error $e(t) = y(t) - y_{\rm ref}(t)$ satisfies
\begin{equation}\label{eq:error-e}
    \forall\, t\ge 0:\ \|e(t)\|\le \sum_{i=1}^{\ell+1} \alpha_i \big(\psi_{i-1}(t) + \hat K_{i-2}(t)\big),
\end{equation}
where $\alpha_i := \|\Gamma K \tilde Q^\ell F_i\|$ for~$F_i$ as in~\eqref{eq:y-ynew}, $i=1,\ldots,\ell$, $\alpha_{\ell+1}:=\|\Gamma\|$ and~$\hat K_i$ is as in~\eqref{eq:def-hatKi}, $i=-1,\ldots,\ell-1$.
\end{Thm}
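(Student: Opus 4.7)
The plan is to relate the original tracking error $e(t)=y(t)-y_{\rm ref}(t)$ back to the virtual error $e_0(t)=y_{\rm new}(t)-\hat y_{\rm ref}(t)$ and its derivatives, and then to bound those derivatives using the controller's recursive structure together with the sharpened funnel estimate \eqref{eq:est-e_i-eps}. The first step is to observe that formula \eqref{eq:y-ynew}, which was derived from the $\eta_2$-subsystem alone, applies verbatim to the reference system \eqref{eq:new-ref} as well, because $\eta_{2,\rm ref}$ satisfies the very same ODE as $\eta_2$ but with $y_{\rm ref}$ in place of $y$ and with zero disturbance. Hence, under assumption~(A3) (which gives $\delta=0$), I would obtain simultaneously $y(t)=\Gamma y_{\rm new}^{(\ell)}(t)+\sum_{i=1}^{\ell}\Gamma K\tilde Q^\ell F_i\, y_{\rm new}^{(i-1)}(t)$ and $y_{\rm ref}(t)=\Gamma\hat y_{\rm ref}^{(\ell)}(t)+\sum_{i=1}^{\ell}\Gamma K\tilde Q^\ell F_i\,\hat y_{\rm ref}^{(i-1)}(t)$. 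Subtracting and using $e_0=y_{\rm new}-\hat y_{\rm ref}$ gives
\[
  e(t)=\Gamma e_0^{(\ell)}(t)+\sum_{i=1}^{\ell}\Gamma K\tilde Q^\ell F_i\, e_0^{(i-1)}(t).
\]

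Taking norms and invoking the definitions of $\alpha_1,\ldots,\alpha_{\ell+1}$ immediately reduces the theorem to proving the pointwise estimate $\|e_0^{(j)}(t)\|\le \psi_j(t)+\hat K_{j-1}(t)$ for $j=0,1,\ldots,\ell$, with the convention $\hat K_{-1}=0$. The base case $j=0$ is just \eqref{eq:est-e_i-eps} at $i=0$, since $\psi_0(t)-\eps_0(t)\le\psi_0(t)$. For $j\ge 1$ my plan is to proceed by induction on $j$ using the controller definition \eqref{eq:fun-con}, which after rearranging reads $\dot e_{j-1}(t)=e_j(t)-k_{j-1}(t)\,e_{j-1}(t)$. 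Differentiating this relation $j-1$ more times via the Leibniz rule produces, after a routine rearrangement, an expression for $e_0^{(j)}(t)$ as $e_j(t)$ plus a polynomial combination of the lower-order errors $e_0(t),\ldots,e_{j-1}(t)$ together with products of derivatives of the gains $k_0,\ldots,k_{j-1}$.

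To control the gains I will use the key inequality, already used in the proof of Lemma~3, that $\|e_i(t)\|\le\psi_i(t)-\eps_i(t)$ implies $k_i(t)\le \psi_i(t)/\eps_i(t)=K_{i,0}(t)$, so $k_i(t)\|e_i(t)\|\le M_{i,0}(t)=N_{i,0}(t)K_{i,0}(t)$. Repeated differentiation of $k_i$ together with the Leibniz rule applied to $1-\varphi_i^2\|e_i\|^2$ yields exactly the recursive bounds encoded in the quantities $L_{i,j},\Phi_{i,j},\Sigma_{i,j},K_{i,j},M_{i,j}$, with $N_{i,j}$ governing the derivatives of $e_i$ themselves through $\|e_i^{(j)}(t)\|\le N_{i,j}(t)$. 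Tracking these combinatorial identities carefully (this is the content of the induction step and follows the pattern of \cite[Prop.~3.2]{BergHoan18} adapted to the improved $\eps_i$), the $j$-th derivative of $e_0$ is bounded by $\psi_j(t)$ (the $\|e_j(t)\|\le N_{j,0}(t)=\psi_j(t)$ term) plus the telescoping sum $\sum_{i=0}^{j-1}M_{i,j-1-i}(t)=\hat K_{j-1}(t)$ coming from the gain/error products. Combining this estimate with the reduction above yields the claimed bound \eqref{eq:error-e}.

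The main obstacle is bookkeeping: verifying that the intricate recursions defining $N_{i,j},K_{i,j},M_{i,j}$ indeed reproduce the Leibniz expansions that arise from iterated substitution into $\dot e_{i} = e_{i+1}-k_i e_i$. Once the inductive hypothesis $\|e_i^{(j)}(t)\|\le N_{i,j}(t)$ and $|k_i^{(j)}(t)|\le K_{i,j}(t)$ is formulated correctly, the step is a straightforward but tedious application of Leibniz's rule; no genuinely new analytic idea is required, only the improved lower bound on $\eps_i$ from Lemma~\ref{Lem:est-eps_i} to make $K_{i,0}(t)=\psi_i(t)/\eps_i(t)$ well-defined and time-varying.
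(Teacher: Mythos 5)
Your proposal is correct and follows essentially the same route as the paper: you derive the identity $e(t)=\Gamma e_0^{(\ell)}(t)+\sum_{i=1}^{\ell}\Gamma K\tilde Q^{\ell}F_i\,e_0^{(i-1)}(t)$ from~\eqref{eq:y-ynew} applied to both $y$ and $y_{\rm ref}$ (with $\delta=0$ by~(A3)), and then bound $\|e_0^{(j)}(t)\|\le\psi_j(t)+\hat K_{j-1}(t)$ via the recursive gain estimates. The only difference is that you sketch the Leibniz-rule induction explicitly, whereas the paper delegates it to \cite[Prop.~3.2]{BergHoan18} with the same modification (pointwise $\eps_i(t)$ from~\eqref{eq:est-e_i-eps} in place of constants).
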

\begin{proof} By~\eqref{eq:y-ynew}, and a similar equation for~$y_{\rm ref}$ in terms of~$\hat y_{\rm ref}$, both with $\delta=0$ by~(A3), we find that
\[
    e(t) = \Gamma e_0^{(\ell)}(t) + \sum_{i=1}^\ell \Gamma K \tilde Q^\ell F_i e_0^{(i-1)}(t)
\]
for all $t\ge 0$. Inequality~\eqref{eq:error-e} now follows from~\cite[Prop.~3.2]{BergHoan18}, where we use a straightforward extension of this result here: Instead of using the estimate~\eqref{eq:error-evlt} with constant~$\eps_i$ we use~\eqref{eq:est-e_i-eps} with the solutions~$\eps_i(\cdot)$ of~\eqref{eq:ODE-eps_i} and instead of taking the supremum norm of~$\psi_i$ and~$\varphi_i^{(j)}$ in the definition of~$\hat K_i$ we use the values at each~$t\ge 0$. The modification of the proof of~\cite[Prop.~3.2]{BergHoan18} is obvious and omitted.
\end{proof}

We like to highlight that it is a consequence of inequality~\eqref{eq:error-e} that indeed the controller~\eqref{eq:fun-con} achieves prescribed performance of the tracking error $e(t) = y(t) - y_{\rm ref}(t)$ for~\eqref{eq:ABC}, i.e., $\|e(t)\| < \varphi(t)^{-1}$ for all $t\ge 0$. Given any~$\varphi\in\Phi_0$ such that $\varphi(0)\|e(0)\|<1$, we may always choose $\varphi_0,\ldots, \varphi_{r+\ell-1}$ satisfying~\eqref{eq:con-ass} such that
\begin{equation}\label{eq:est-phi_i-phi}
    \forall\, t\ge 0:\quad \sum_{i=1}^{\ell+1} \alpha_i \big(\varphi_{i-1}(t)^{-1} + \hat K_{i-2}(t)\big) < \varphi(t)^{-1},
\end{equation}
since, as illustrated above, it can be achieved that~$\hat K_i$ is as small as desired. For instance, if~$r=\ell=2$ and $\alpha_1,\alpha_2,\alpha_3$ as in Theorem~\ref{Thm:fun-con} are given and, for simplicity, we assume that $e_0(0) = e_1(0) = 0$, then we may choose constant $\varphi_j = \psi_j^{-1} = \la_j^{-1}$, $j=0,1,2$, such that $\la_{1}\le\beta \la_0$ for some $\beta>0$. In this case, the initial value problem~\eqref{eq:ODE-eps_i} is given by
\[
    \dot \eps_j(t) = - \frac{(2\la_{j+1} + \la_j) \eps_j(t) - \la_j^2}{2\eps_j(t)},\quad \eps_j(0)=\la_j
\]
for $j=0,1$. Since $\dot\eps_j(0) = -\la_{j+1} < 0$ and $\hat \eps_j(t) = \frac{\la_j^2}{2\la_{j+1}+\la_j}$ is the equilibrium solution, a simple analysis reveals that~$\eps_j$ is strictly monotonically decreasing with $\lim_{t\to\infty} \eps_j(t) = \frac{\la_j^2}{2\la_{j+1}+\la_j}$. Therefore, we obtain
\begin{align*}
    \hat K_0(t) &\le 2\la_1 + \la_0 \le (2\beta + 1) \la_0,\\
    \hat K_1(t) &\le \frac{2(2\la_1 + \la_0)(\la_1+\la_0)(4\la_1+3\la_0)}{\la_0^2} + \la_1 + 2\la_2\\
    &\le 2(2\beta + 1)(\beta+1)(4\beta+3)\la_0 + \la_1 + 2\la_2.
\end{align*}
Then, we find that $\|e(t)\| < \varphi(t)^{-1}$ holds for all $t\ge 0$, if $\la_0, \la_1, \la_2$ are chosen small enough so that the inequality
\begin{align*}
    & \alpha_1 \la_0 + \alpha_2\big(\la_1 + (2\beta + 1) \la_0\big)\\
    & + \alpha_3 \big(3\la_2 \!+\! \la_1 \!+\! 2(2\beta \!+\! 1)(\beta \!+\! 1)(4\beta\!+\! 3)\la_0 \big) < \inf_{t\ge 0} \varphi(t)^{-1}
\end{align*}
is satisfied.

\begin{Rem}
  We stress that a general construction formula for~$\varphi_0,\ldots,\varphi_{r+\ell-1}$ such that~\eqref{eq:est-phi_i-phi} holds for a given~$\varphi\in\Phi_0$ is not available yet. Further research is necessary to find a suitable way for handling~$\hat K_i$ in~\eqref{eq:est-phi_i-phi}, which depends on~$\varphi_0,\ldots,\varphi_{i+1}$ and~$e_0(0),\ldots,e_i(0)$. Nevertheless, the design parameters may be appropriately adjusted with the help of offline simulations.
\end{Rem}

%
\section{Simulations}\label{Sec:Sim}
%

We illustrate the funnel controller~\eqref{eq:fun-con} by means of a modified linear version of the example discussed in~\cite{DevaChen96}. To this end, consider a system~\eqref{eq:ABC} with
\begin{align*}
    A &= \begin{smallbmatrix} -1 & 1 &0 &0\\ 0 &-3& 0& 1\\ 1& 0& -2& 0\\ 0&0& 3& -1\end{smallbmatrix},\ \ B=\begin{smallbmatrix} 0\\ 2\\ 0\\0\end{smallbmatrix},\ \ C = [1, 0, -3, 0],
\end{align*}
and $d=(0,d_2,0,d_4)^\top\in\cL^{\infty}(\R_{\ge 0}\to\R^4)$, which has strict relative degree $r=2$. The initial value is chosen as $x^0 = 0$ and the reference trajectory as
\[
    y_{\rm ref}(t) = \left\{ \begin{array}{rl} (1-\cos t), & t\in[0,2\pi],\\ 0, & t>2\pi.\end{array}\right.
\]
Clearly, we have $y_{\rm ref}\in\cW^{1,\infty}(\R_{\ge 0}\to\R)$. As disturbances we consider
\[
    d_2(t) = \tfrac12\sin(5t) + \cos(8t),\quad d_4(t)=\sin(6t)+\tfrac12\cos(4t)
\]
for $t\ge 0$. In order to determine the new output as in~\eqref{eq:new-output} we need to transform the system into Byrnes-Isidori form~\eqref{eq:BIF}. With $\eta_1 = x_4$ and $\eta_2 = x_3$ this form is given by
\begin{align*}
  \ddot y(t) &= -18y(t)-7\dot y(t) + \eta_1(t) - 24\eta_2(t) + 2u(t)+d_2(t),\\
  \dot \eta_1(t) &= -\eta_1(t) + 3\eta_2(t) +d_4(t),\\
  \dot \eta_2(t) &= \eta_2(t) + y(t).
\end{align*}
It is now easy to see that assumption~(A1) is satisfied with the choice $\ell=1$ and $\tilde Q = \tilde P = 1$, assumption~(A2) is satisfied since $k_3=0$, and assumption~(A3) is satisfied as $\delta=0$. Hence,~$K$ as in~\eqref{eq:defK} is given by $K = \Gamma^{-1} \tilde P^{-1} = \tfrac{1}{2}$ and the new output~\eqref{eq:new-output} is $y_{\rm new}(t) = \tfrac12 \eta_2(t) = \tfrac12 x_3(t)$. The initial value $\eta_{2,\rm ref}^0$ as in~\eqref{eq:eta-ref-0} needed for the controller~\eqref{eq:fun-con} can be computed as
\begin{align*}
    \eta_{2,\rm ref}^0 = - \int_0^{2\pi} e^{-s} (1 - \cos s) \ds{s} =  -\frac{1069}{238}.
\end{align*}
The funnel functions are chosen as
\begin{align*}
  \varphi_0(t) &= \big(e^{-2t} + 0.01\big)^{-1},\qquad \varphi_1(t) = \big(2 e^{-2t} + 0.01\big)^{-1},\\
  \varphi_2(t) &= \big(2 e^{-10t} + 0.01\big)^{-1},
\end{align*}
and clearly~\eqref{eq:con-ass} is satisfied and the initial errors $e_0(0), e_1(0), e_2(0)$ lie within the respective funnel boundaries. Therefore, feasibility of the controller~\eqref{eq:fun-con} is guaranteed by Theorem~\ref{Thm:fun-con}.

The bound~\eqref{eq:error-e} for the original tracking error~$e = y - y_{\rm ref}$ as given in Theorem~\ref{Thm:fun-est-e} reads as follows:
\[
    \|e(t)\| \le \alpha_1 \psi_0(t) + \alpha_2 \big(\psi_1(t) + \hat K_0(t)\big) =: \Psi(t),\quad t\ge 0,
\]
where $\psi_i(\cdot) = \varphi_i(\cdot)^{-1}$ for $i=0,1$. Clearly, $\alpha_2 = \|\Gamma\| = 2$ and we  calculate that $\alpha_1 = \|\Gamma K \tilde Q F_1\| = 2$. Then we obtain that
\[
    \|e(t)\|\le \Psi(t) = 2\left( \psi_0(t) + \psi_1(t) + \frac{\psi_0(t)^2}{\eps_0(t)}\right),
\]
where $\eps_0$ is the solution of~\eqref{eq:ODE-eps_i} for $i=0$.

\captionsetup[subfloat]{labelformat=empty}
\begin{figure}[h!tb]
  \centering
  \subfloat[Fig.~\ref{fig:sim}a: Output and reference trajectory]
{
\centering
  \hspace*{-5mm} \includegraphics[trim=20 10 20 10,clip,width=7.5cm]{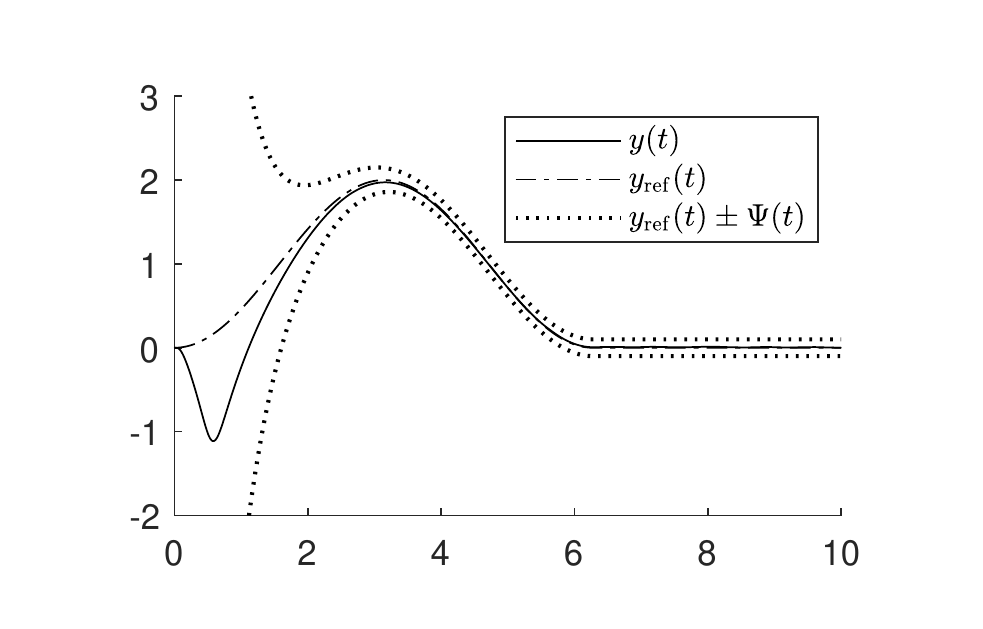}
\label{fig:sim-y}
}\\[-1mm]
\subfloat[Fig.~\ref{fig:sim}b: States]
{
\centering
 \hspace*{-5mm} \includegraphics[trim=20 10 20 10,clip,width=7.5cm]{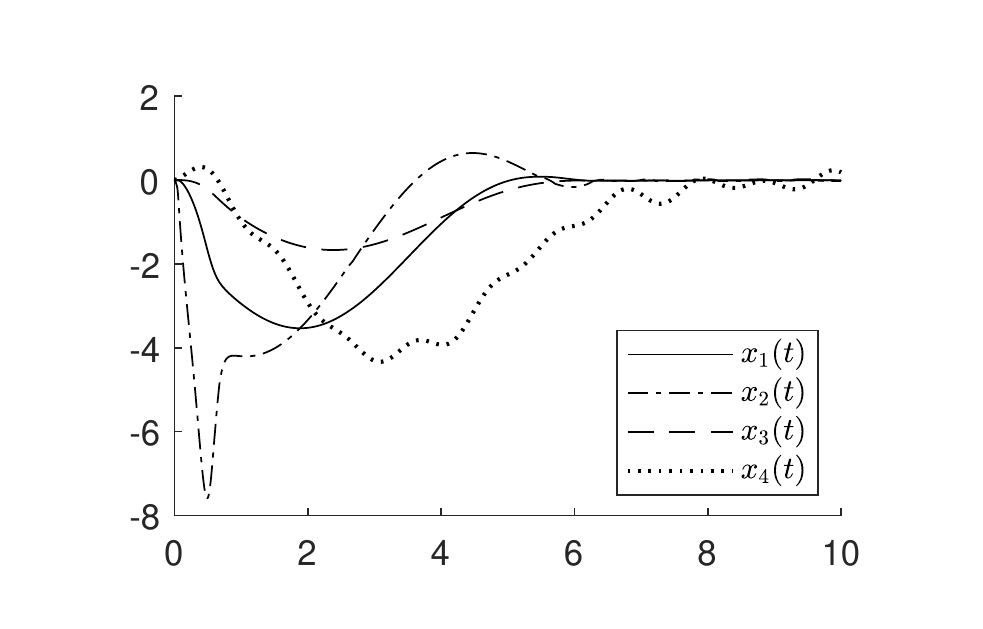}
\label{fig:sim-x}
}\\[-1mm]
\subfloat[Fig.~\ref{fig:sim}c: Input function]
{
\centering
 \hspace*{-5mm} \includegraphics[trim=20 10 20 10,clip,width=7.5cm]{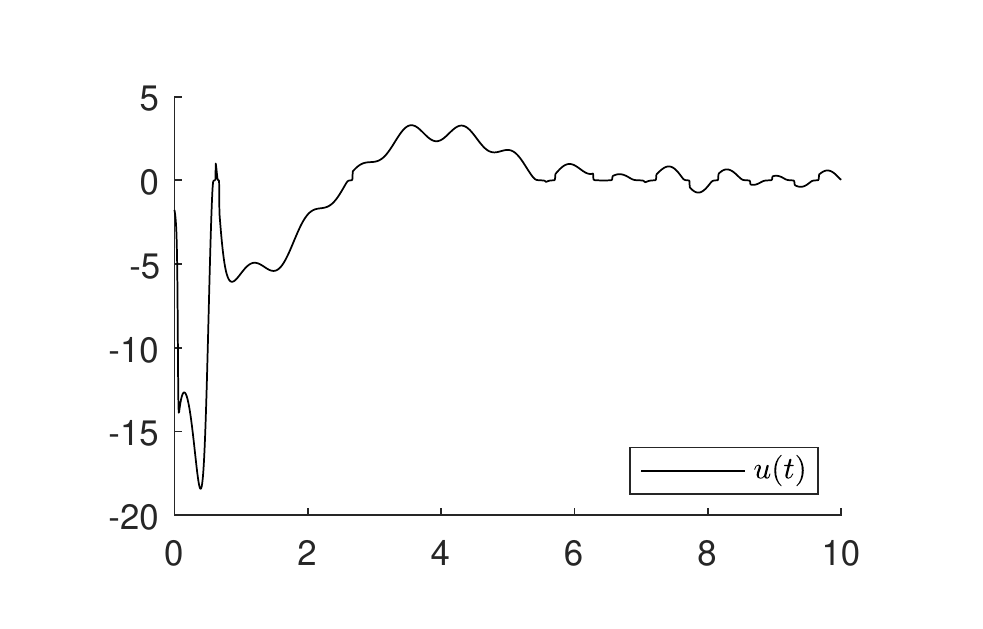}
\label{fig:sim-u}
}
\caption{Simulation of the controller~\eqref{eq:fun-con} for system~\eqref{eq:ABC}.}
\vspace*{-5mm}
\label{fig:sim}
\end{figure}

The simulation of the controller~\eqref{eq:fun-con} applied to system~\eqref{eq:ABC} over the time interval $[0,10]$ has been performed in MATLAB (solver: {\tt ode15s}, rel.\ tol.: $10^{-8}$, abs.\ tol.: $10^{-7}$) and is depicted in Fig.~\ref{fig:sim}. Fig.~\ref{fig:sim-y} shows the original output~$y$, the reference signal~$y_{\rm ref}$ and the bounds~$y_{\rm ref}\pm\Psi$ for the output. The states are depicted in Fig.~\ref{fig:sim-x} and the input in Fig.~\ref{fig:sim-u}. It can be seen that, even in the presence of the disturbances, a prescribed performance of the tracking error can be achieved with the funnel controller~\eqref{eq:fun-con}, while at the same time the generated input is bounded and shows an acceptable performance as well.

%
\section{Conclusion}\label{Sec:Concl}
%

In the present paper we proposed a novel controller for achieving tracking with prescribed performance of the tracking error for uncertain linear non-minimum phase systems. Our approach is based on the construction of a new output for the system given by~\eqref{eq:new-output} to which the recently developed funnel controller from~\cite{BergHoan18} is applied. To guarantee feasibility a new reference signal needs to be calculated as well, which is given by the solution of~\eqref{eq:new-ref} with initial value~\eqref{eq:eta-ref-0}. Approximating the reference signal (on an interval of interest) by an exosystem of the form~\eqref{eq:exo}, we may compute the latter initial value via the solution of a Sylvester equation. The resulting controller~\eqref{eq:fun-con} is shown to be feasible in Theorem~\ref{Thm:fun-con}, independent of the disturbance~$d$. Bounds for the original tracking error have been derived in Theorem~\ref{Thm:fun-est-e}. It has been shown that, by appropriately designing the funnel functions, these bounds can be adjusted to be as small as desired. At the same time, the input~$u$ generated by~\eqref{eq:fun-con} remains bounded.

We stress that some features of funnel control (see e.g.~\cite{BergHoan18,BergReis18b,IlchRyan02b}) are lost with this approach: The controller~\eqref{eq:fun-con} is not model-free in general, since knowledge of~$\tilde Q$ and~$\tilde P$ is required to determine the new output~\eqref{eq:new-output} and the new reference~\eqref{eq:new-ref}. Furthermore, measurement of~$y, \dot y,\ldots, y^{(r-1)}$ is not sufficient, but it is required that additional state variables can be measured so that $y_{\rm new}, \dot y_{\rm new}, \ldots, y_{\rm new}^{(r+\ell-1)}$ are available to the controller. However, knowledge of the full initial value~$x^0$ and the disturbance~$d(\cdot)$ are not required, cf.\ Section~\ref{Ssec:ContrObj}.

While the controller design~\eqref{eq:fun-con} is robust with respect to disturbances satisfying~\eqref{eq:rel-deg} and~(A3), further research is necessary to extend this class.

\section*{Acknowledgement}

I am indebted to Achim Ilchmann (Technische Universit\"at Ilmenau) and Timo Reis (Universtit\"at Hamburg) for several constructive discussions.

\bibliographystyle{elsarticle-harv}

%
%
%

\end{document}